\documentclass[11pt,a4paper]{article}

\usepackage[utf8]{inputenc}
\usepackage[T1]{fontenc}
\usepackage{amsmath,amssymb,amsthm}
\usepackage{mathtools}
\usepackage{graphicx}
\usepackage{booktabs}
\usepackage[hypertexnames=false]{hyperref}
\usepackage{geometry}
\usepackage{float}

\makeatletter
\def\Cref@getprefix#1:#2\relax{#1}
\def\Cref@prefix@sec{sec}
\def\Cref@prefix@fig{fig}
\def\Cref@prefix@tab{tab}
\def\Cref@prefix@eq{eq}
\def\Cref@prefix@thm{thm}
\def\Cref@prefix@prop{prop}
\def\Cref@prefix@lem{lem}
\def\Cref@prefix@cor{cor}
\def\Cref@prefix@conj{conj}
\def\Cref@prefix@def{def}
\def\Cref@prefix@rem{rem}
\def\Cref@prefix@ex{ex}

\def\Cref@type@cap#1{%
  \edef\Cref@tmp{#1}%
  \ifx\Cref@tmp\Cref@prefix@sec Section%
  \else\ifx\Cref@tmp\Cref@prefix@fig Figure%
  \else\ifx\Cref@tmp\Cref@prefix@tab Table%
  \else\ifx\Cref@tmp\Cref@prefix@eq Equation%
  \else\ifx\Cref@tmp\Cref@prefix@thm Theorem%
  \else\ifx\Cref@tmp\Cref@prefix@prop Proposition%
  \else\ifx\Cref@tmp\Cref@prefix@lem Lemma%
  \else\ifx\Cref@tmp\Cref@prefix@cor Corollary%
  \else\ifx\Cref@tmp\Cref@prefix@conj Conjecture%
  \else\ifx\Cref@tmp\Cref@prefix@def Definition%
  \else\ifx\Cref@tmp\Cref@prefix@rem Remark%
  \else\ifx\Cref@tmp\Cref@prefix@ex Example%
  \else Ref.%
  \fi\fi\fi\fi\fi\fi\fi\fi\fi\fi\fi\fi
}

\def\Cref@type@low#1{%
  \edef\Cref@tmp{#1}%
  \ifx\Cref@tmp\Cref@prefix@sec section%
  \else\ifx\Cref@tmp\Cref@prefix@fig figure%
  \else\ifx\Cref@tmp\Cref@prefix@tab table%
  \else\ifx\Cref@tmp\Cref@prefix@eq equation%
  \else\ifx\Cref@tmp\Cref@prefix@thm theorem%
  \else\ifx\Cref@tmp\Cref@prefix@prop proposition%
  \else\ifx\Cref@tmp\Cref@prefix@lem lemma%
  \else\ifx\Cref@tmp\Cref@prefix@cor corollary%
  \else\ifx\Cref@tmp\Cref@prefix@conj conjecture%
  \else\ifx\Cref@tmp\Cref@prefix@def definition%
  \else\ifx\Cref@tmp\Cref@prefix@rem remark%
  \else\ifx\Cref@tmp\Cref@prefix@ex example%
  \else ref.%
  \fi\fi\fi\fi\fi\fi\fi\fi\fi\fi\fi\fi
}

\DeclareRobustCommand{\Cref}[1]{%
  \begingroup
  \edef\Cref@label{#1}%
  \edef\Cref@prefix{\expandafter\Cref@getprefix\Cref@label:\relax}%
  \hyperref[#1]{\Cref@type@cap{\Cref@prefix}~\ref*{#1}}%
  \endgroup
}
\DeclareRobustCommand{\cref}[1]{%
  \begingroup
  \edef\Cref@label{#1}%
  \edef\Cref@prefix{\expandafter\Cref@getprefix\Cref@label:\relax}%
  \hyperref[#1]{\Cref@type@low{\Cref@prefix}~\ref*{#1}}%
  \endgroup
}
\makeatother

\geometry{margin=1in}
\setlength{\emergencystretch}{2em} 

\theoremstyle{plain}
\newtheorem{theorem}{Theorem}[section]

\newtheorem{proposition}[theorem]{Proposition}
\newtheorem{corollary}[theorem]{Corollary}

\theoremstyle{definition}
\newtheorem{definition}[theorem]{Definition}
\newtheorem{example}[theorem]{Example}

\theoremstyle{remark}
\newtheorem{remark}[theorem]{Remark}

\newcommand{\R}{\mathbb{R}}

\newcommand{\Cov}{\mathrm{Cov}}
\newcommand{\Var}{\mathrm{Var}}


\title{\textbf{Magic Gems: A Polyhedral Framework for Magic Squares}\\
\large Connecting Combinatorics, Geometry, and Linear Algebra}

\author{Kyle Elliott Mathewson\\
\textit{Faculty of Science, University of Alberta}\\
\texttt{kyle.mathewson@ualberta.ca}}

\date{\today}

\begin{document}

\maketitle

\begin{abstract}
We introduce \emph{Magic Gems}, a geometric representation of magic squares as three-dimensional polyhedra. 
By mapping an $n \times n$ magic square onto a centered coordinate grid with cell values as vertical displacements, 
we construct a point cloud whose convex hull defines the Magic Gem.
Building on prior work connecting magic squares to physical properties such as moment of inertia,
this construction reveals an explicit statistical structure: 
we show that magic squares have vanishing covariances between position and value.
We develop a covariance energy functional---the sum of squared covariances with 
individual row, column, and diagonal indicator variables---and prove that for all $n \geq 3$,
an arrangement is a magic square if and only if this complete energy vanishes.
This characterization transforms the classical line-sum definition into a statistical orthogonality condition.
We also study a simpler ``low-mode'' relaxation using only four aggregate position indicators;
this coincides with the complete characterization for $n=3$ (verified exhaustively)
but defines a strictly larger class for $n \geq 4$ (explicit counterexamples computed).
Perturbation analysis demonstrates that magic squares are isolated local minima in the energy landscape.
The representation is invariant under dihedral symmetry $D_4$, 
yielding canonical geometric objects for equivalence classes.

\medskip
\noindent\textbf{Keywords:} Magic squares, convex polyhedra, covariance, energy landscape, combinatorial geometry
\end{abstract}

\section{Introduction}
\label{sec:introduction}

Magic squares have fascinated mathematicians, artists, and mystics for millennia \cite{pickover2002zen},
appearing famously in Albrecht D\"urer's 1514 engraving \emph{Melencolia I} \cite{durer1514melencolia}.
The oldest known example, the \emph{Lo Shu} square, appears in Chinese legend dating back to 2800 BCE
and remains the unique (up to symmetry) $3 \times 3$ magic square using the integers $1$ through $9$.
Despite this ancient pedigree and centuries of intensive study \cite{andrews1960magic}, magic squares continue to yield new mathematical insights,
with recent work connecting them to algebraic structures, eigenvalue problems, and statistical mechanics.

\begin{definition}[Magic Square]
\label{def:magic_square}
An $n \times n$ \emph{magic square} is an arrangement of the integers $1, 2, \ldots, n^2$ in a square grid
such that the sum of each row, each column, and both main diagonals equals the \emph{magic constant}
\begin{equation}
M(n) = \frac{n(n^2 + 1)}{2}.
\end{equation}
\end{definition}

In this paper, we develop a geometric perspective on magic squares by representing them 
as three-dimensional polyhedra, which we call \emph{Magic Gems}.
The construction arises from a natural physical analogy: imagine placing checkers on an $n \times n$ board,
where the height of each stack corresponds to the number occupying that cell.
This creates a ``topography'' over the grid, and centering this configuration in three-dimensional space
yields a point cloud whose convex hull defines the Magic Gem.
This geometric approach complements prior work by Loly and collaborators on the physical properties 
of magic squares \cite{loly2004invariance, loly2009spectra}.

The central insight of this work is that the defining algebraic property of magic squares---equal row, column, and diagonal sums---manifests geometrically as a precise statistical condition.
Specifically, we show that magic squares are characterized by the vanishing of a covariance energy functional
that measures correlations between spatial position and assigned value across all four structural directions
(rows, columns, and both diagonals).
This energy characterization builds on earlier work connecting magic squares to physical properties
such as moment of inertia \cite{loly2004invariance}, providing an explicit statistical framework
that bridges classical combinatorics and multivariate analysis.

\subsection{Main Contributions}

This paper develops the Magic Gem framework through several interconnected results.
We begin by formalizing the construction of Magic Gems from magic squares
and establishing that symmetry-equivalent squares yield geometrically identical polyhedra,
thus providing a canonical geometric representative for each equivalence class under the dihedral group $D_4$.

Our main theoretical contribution is the Complete Energy Characterization Theorem (\Cref{thm:complete_energy_char}), 
which establishes that magic squares correspond exactly to zeros of a covariance energy functional
using individual row, column, and diagonal indicator variables.
This result holds for all $n \geq 3$ and follows from elementary properties of covariances.
We also identify a natural four-term ``low-mode'' relaxation using aggregate position coordinates;
this simpler energy characterizes magic squares for $n=3$ but defines a strictly larger class for $n \geq 4$,
with explicit counterexamples computed.
This result extends the physical interpretation of magic squares initiated by 
Loly \cite{loly2004invariance}, who showed that the moment of inertia tensor of a magic square 
(treating entries as masses) exhibits special invariance properties.
Our covariance formulation makes this connection explicit and enables systematic perturbation analysis,
confirming that magic squares are isolated points in the arrangement space.

We complement these theoretical results with extensive computational experiments on magic squares 
of orders three, four, and five.
These experiments validate our theoretical predictions and reveal the geometric structure 
of both the Magic Gems themselves and the broader space of arrangements in which they reside.
The rarity of magic squares---approximately one in 45,000 random $3 \times 3$ arrangements---corresponds 
to their isolation as zero-covariance configurations in a landscape where generic arrangements 
exhibit substantial covariance.
An interactive web application for exploring Magic Gems is available at
\url{https://kylemath.github.io/MagicGemWebpage/}.

\subsection{Paper Organization}

The remainder of this paper proceeds as follows.
\Cref{sec:background} reviews classical results on magic squares and establishes notation 
for the geometric and statistical concepts we employ.
\Cref{sec:methodology} presents the Magic Gem construction, proves the Complete Energy Characterization Theorem,
introduces the low-mode relaxation, and develops the perturbation analysis framework.
\Cref{sec:results} describes our computational experiments, validating theoretical predictions 
and exploring the structure of the arrangement space for $n = 3$, $4$, and $5$.
\Cref{sec:discussion} interprets our findings and discusses connections to related areas.
Finally, \Cref{sec:conclusion} summarizes our contributions and outlines directions for future research.

\section{Background}
\label{sec:background}

This section establishes the mathematical foundations for our work, 
reviewing classical results on magic squares and introducing the geometric and statistical concepts 
that underpin the Magic Gem framework.

\subsection{Magic Squares: Classical Results}

The enumeration of magic squares reveals a rapid growth in complexity with the order $n$ \cite{trump2012enumeration, beck2010enumeration}.
For $n = 3$, there exist exactly eight magic squares, all related by rotations and reflections,
yielding a single essentially different magic square---the Lo Shu.
The situation changes dramatically for larger orders: 
$n = 4$ admits 7,040 magic squares forming 880 equivalence classes under the dihedral group,
while $n = 5$ yields approximately 275 million magic squares comprising roughly 34 million equivalence classes.
This explosive growth poses significant challenges for enumeration and classification,
motivating the search for structural insights that transcend explicit construction.

The magic constant $M(n) = n(n^2 + 1)/2$ admits a simple derivation from first principles.
The sum of all entries equals $\sum_{k=1}^{n^2} k = n^2(n^2 + 1)/2$,
which must be distributed equally among the $n$ rows, yielding $M(n)$ per row.
The same argument applies to columns, and the diagonal constraints provide additional structure
that severely restricts the space of valid configurations.

Classical construction methods provide algorithmic approaches to generating magic squares of various orders \cite{andrews1960magic}.
The Siamese method, attributed to de la Loubère, constructs magic squares for odd $n$
by starting in the middle of the top row and moving diagonally up-right,
wrapping around edges and moving down when blocked.
For doubly-even orders ($n$ divisible by 4), one fills the grid sequentially 
and then swaps diagonal elements with their complements.
Singly-even orders ($n = 4k + 2$) require more sophisticated techniques
combining quadrant-based construction with targeted transpositions.
These methods demonstrate that magic squares exist for all $n \geq 3$,
though they produce only a tiny fraction of all possible magic squares for larger orders.

\subsection{Symmetry and the Dihedral Group}

The dihedral group $D_4$ acts naturally on $n \times n$ grids through rotations and reflections \cite{ahmed2004franklin}.
This group comprises eight elements: four rotations (by $0^\circ$, $90^\circ$, $180^\circ$, and $270^\circ$) 
and four reflections (horizontal, vertical, and along both diagonals).
Since the magic square conditions treat rows, columns, and diagonals symmetrically 
under these operations, $D_4$ permutes magic squares among themselves.

\begin{proposition}
If $S$ is a magic square, then $g \cdot S$ is also a magic square for any $g \in D_4$.
\end{proposition}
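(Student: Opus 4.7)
The plan is to verify the statement by checking that each generator of $D_4$ preserves both the entry multiset and the set of \emph{magic lines} (rows, columns, and the two main diagonals). Since $D_4$ is finite and each $g \in D_4$ merely permutes the cells of the grid (it does not change their values), the multiset of entries of $g \cdot S$ equals that of $S$, so $g \cdot S$ still contains exactly the integers $1, 2, \ldots, n^2$. The work is therefore to confirm that the line-sum conditions are preserved.

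First I would reduce to generators. The group $D_4$ is generated by the $90^\circ$ rotation $r$ and, say, the reflection $s$ across the main diagonal (transposition). It suffices to show that $r \cdot S$ and $s \cdot S$ are magic whenever $S$ is magic, since the property ``$g \cdot S$ is magic'' is then closed under composition. For the rotation $r$: under a $90^\circ$ turn, the $i$-th row of $S$ becomes (up to reversal) the $(n+1-i)$-th column of $r \cdot S$, the columns of $S$ become the rows of $r \cdot S$, and the two main diagonals are interchanged. Since summation is invariant under reversal, every row, column, and diagonal sum of $r \cdot S$ equals some row, column, or diagonal sum of $S$, each of which is $M(n)$. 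For the reflection $s$: transposition swaps rows with columns and fixes both diagonals setwise, so again every line sum of $s \cdot S$ is a line sum of $S$.

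Combining these two cases, every element of $D_4$ maps the set of magic lines of $S$ bijectively to the set of magic lines of $g \cdot S$ (possibly with cells traversed in reversed order), preserving the sum along each line. Hence all $n$ row sums, all $n$ column sums, and both diagonal sums of $g \cdot S$ equal $M(n)$, so $g \cdot S$ is a magic square.

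I do not anticipate a real obstacle: the argument is essentially bookkeeping about how $D_4$ permutes the four ``structural directions'' of the grid. The only point that requires care is confirming that the two main diagonals form a $D_4$-invariant pair (rather than being individually fixed), which is immediate once one observes that rotation by $90^\circ$ swaps them while reflection across either main diagonal fixes each; this is why the definition of magic square includes \emph{both} diagonals rather than just one, and why the symmetry argument goes through cleanly.
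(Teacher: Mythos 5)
Your proof is correct and follows essentially the same approach as the paper's: the $D_4$ action permutes the magic lines among themselves while preserving the entry set, so all line sums remain $M(n)$. You simply make explicit (via the generator reduction and the reversal-invariance of sums) what the paper asserts in one line.
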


\begin{proof}
The $D_4$ action preserves the set of row sums, column sums, and diagonal sums,
merely permuting which lines play which roles.
Since the action also preserves the set of entries $\{1, \ldots, n^2\}$,
it maps magic squares to magic squares.
\end{proof}

This symmetry motivates the notion of \emph{essentially different} magic squares:
two squares are essentially the same if one can be obtained from the other by a $D_4$ transformation.
For $n = 3$, all eight magic squares form a single equivalence class,
while larger orders admit multiple classes with distinct structural properties.

\subsection{Convex Hulls and Polyhedra}

The convex hull of a finite point set $P \subset \R^d$ is the smallest convex set containing $P$,
equivalently characterized as the set of all convex combinations of points in $P$:
\[
\text{conv}(P) = \left\{ \sum_{i=1}^{k} \lambda_i p_i : p_i \in P, \lambda_i \geq 0, \sum_i \lambda_i = 1 \right\}.
\]
For finite point sets in $\R^3$, the convex hull is a convex polyhedron 
whose vertices form a subset of the original points.
The remaining points lie in the interior or on faces of the polyhedron.

Key geometric quantities associated with a convex polyhedron include its volume, 
surface area, and the number of vertices, edges, and faces.
These invariants provide a coarse characterization of the polyhedron's shape
and will serve as descriptors for comparing Magic Gems across different arrangements.

\subsection{Covariance and the Moment of Inertia}

For a discrete collection of points $\{(\mathbf{x}_i)\}_{i=1}^{N}$ with positions $\mathbf{x}_i \in \R^d$,
the covariance matrix captures the second-order statistical structure:
\begin{equation}
\Sigma_{jk} = \frac{1}{N} \sum_{i=1}^{N} (x_{i,j} - \bar{x}_j)(x_{i,k} - \bar{x}_k),
\end{equation}
where $\bar{x}_j = \frac{1}{N} \sum_i x_{i,j}$ denotes the mean of the $j$-th coordinate.
When the point cloud is centered so that $\bar{\mathbf{x}} = \mathbf{0}$, 
this simplifies to $\Sigma_{jk} = \frac{1}{N} \sum_i x_{i,j} x_{i,k}$.

The diagonal entries $\Sigma_{jj}$ measure the variance along each coordinate axis,
while the off-diagonal entries $\Sigma_{jk}$ ($j \neq k$) measure the covariance between coordinates.
A zero off-diagonal entry indicates that the corresponding coordinates are uncorrelated:
knowing one provides no linear information about the other.

Closely related is the moment of inertia tensor from classical mechanics.
For point masses $\{(m_i, \mathbf{x}_i)\}$ with positions $\mathbf{x}_i \in \R^3$, 
the inertia tensor is given by
\begin{equation}
I_{jk} = \sum_{i=1}^{N} m_i \left( \|\mathbf{x}_i\|^2 \delta_{jk} - x_{i,j} x_{i,k} \right).
\end{equation}
The eigenvalues of $I$, called the principal moments of inertia, 
characterize the rotational properties of the point configuration
and are invariant under rigid rotations.
For equal unit masses, the inertia tensor and covariance matrix are closely related,
differing by a trace term and sign conventions.

\section{Methodology: The Magic Gem Framework}
\label{sec:methodology}

We now develop the Magic Gem framework, beginning with the construction of three-dimensional point clouds 
from magic squares and culminating in our main theoretical result connecting magic squares to zero-covariance configurations.

\subsection{From Magic Square to Point Cloud}

Let $S = (s_{ij})$ be an $n \times n$ magic square with entries $s_{ij} \in \{1, 2, \ldots, n^2\}$.
We construct a point cloud $P \subset \R^3$ by assigning three-dimensional coordinates to each cell of the grid.

\begin{definition}[Magic Gem Coordinates]
\label{def:gem_coords}
For each cell $(i, j)$ with $i, j \in \{0, 1, \ldots, n-1\}$, define the point
\begin{equation}
\mathbf{p}_{ij} = \left( x_{ij}, y_{ij}, z_{ij} \right),
\end{equation}
where the coordinates are given by
\begin{align}
x_{ij} &= j - \frac{n-1}{2}, \label{eq:x_coord} \\
y_{ij} &= \frac{n-1}{2} - i, \label{eq:y_coord} \\
z_{ij} &= s_{ij} - \frac{n^2 + 1}{2}. \label{eq:z_coord}
\end{align}
\end{definition}

The horizontal coordinates $x$ and $y$ place the grid centered at the origin,
with the $x$-axis aligned with columns and the $y$-axis aligned with rows (inverted so that the top row has positive $y$).
For a $3 \times 3$ grid, the nine positions have $x$- and $y$-coordinates in $\{-1, 0, 1\}$;
for a $5 \times 5$ grid, they lie in $\{-2, -1, 0, 1, 2\}$.

The vertical coordinate $z$ encodes the cell's value, shifted so that the mean value $(n^2 + 1)/2$ maps to zero.
This centering ensures that the point cloud has zero mean in all three coordinates,
a property that will prove essential for the covariance analysis.

\begin{example}[Lo Shu Square]
For the Lo Shu square
\[
S = \begin{pmatrix}
2 & 7 & 6 \\
9 & 5 & 1 \\
4 & 3 & 8
\end{pmatrix},
\]
with $n = 3$ and center value $5$, the nine points are
\begin{align*}
\mathbf{p}_{00} &= (-1, 1, -3), & \mathbf{p}_{01} &= (0, 1, 2), & \mathbf{p}_{02} &= (1, 1, 1), \\
\mathbf{p}_{10} &= (-1, 0, 4), & \mathbf{p}_{11} &= (0, 0, 0), & \mathbf{p}_{12} &= (1, 0, -4), \\
\mathbf{p}_{20} &= (-1, -1, -1), & \mathbf{p}_{21} &= (0, -1, -2), & \mathbf{p}_{22} &= (1, -1, 3).
\end{align*}
Note that the center cell, containing the value 5, maps to the origin.
\end{example}

\Cref{fig:construction} illustrates the four-step construction process, 
progressing from the magic square through the centered grid to the final three-dimensional representation,
along with all eight $D_4$ symmetry variants of the Lo Shu square.

\begin{figure}[H]
    \centering
    \includegraphics[width=\textwidth]{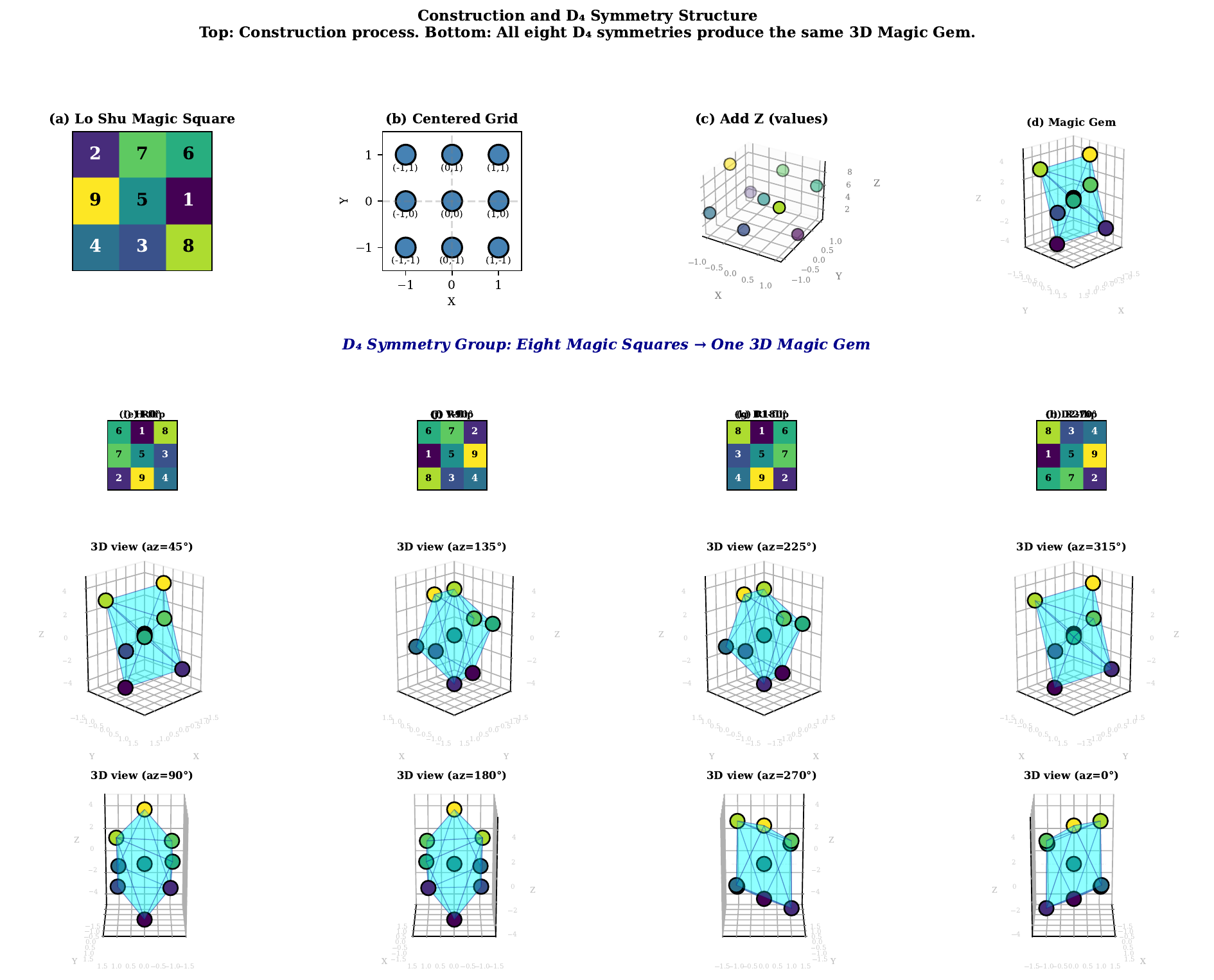}
    \caption{\textbf{Construction process and $D_4$ symmetry structure.}
    \emph{Top row (a--d):} Construction of a Magic Gem from the Lo Shu square.
    (a) The Lo Shu magic square---the unique (up to symmetry) $3 \times 3$ magic square, 
    with each row, column, and diagonal summing to $M(3) = 15$.
    (b) Placement on a centered coordinate grid with origin at center.
    (c) Assignment of $z$-values from cell entries.
    (d) Centered Magic Gem point cloud with convex hull.
    \emph{Bottom section (e--l):} The eight $D_4$ symmetries of the $3 \times 3$ magic square.
    Each panel shows a compact 2D magic square (top) paired with its corresponding 3D Magic Gem (bottom, large and prominent).
    The gems are shown from different viewing angles to emphasize their three-dimensional structure.
    Despite their distinct numerical arrangements, all eight $D_4$ variants yield the same 3D Magic Gem polyhedron,
    demonstrating the geometric invariance of the Magic Gem representation under the symmetry group.}
    \label{fig:construction}
\end{figure}

\subsection{The Magic Gem Polyhedron}

With the point cloud defined, we construct the Magic Gem as its convex hull \cite{barvinok2002course}.

\begin{definition}[Magic Gem]
The \emph{Magic Gem} associated with a magic square $S$ is the convex hull of the corresponding point cloud:
\[
\mathcal{G}(S) = \text{conv}\left( \{ \mathbf{p}_{ij} : 0 \leq i, j < n \} \right).
\]
\end{definition}

A fundamental property of this construction is its invariance under the symmetry operations of the magic square.

\begin{proposition}[Symmetry Invariance]
\label{prop:symmetry_invariance}
If $S'$ is obtained from $S$ by a symmetry operation $g \in D_4$, then $\mathcal{G}(S) = \mathcal{G}(S')$.
\end{proposition}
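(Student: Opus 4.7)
The plan is to exhibit an explicit rigid motion of $\R^3$ that carries the point cloud for $S$ onto the point cloud for $S'$; once this is done, the equivariance of the convex-hull operation under affine (here orthogonal) maps finishes the argument, and the two Magic Gems agree as ``canonical'' polyhedra (equal as sets up to this rigid motion, which is the sense in which the figure caption uses ``same'').

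First I would set up two parallel $D_4$ actions. The group $D_4$ acts on grid indices $(i,j)$, so that $g\cdot S$ is the square $S'$ with $s'_{g(i,j)}=s_{ij}$. The same group also acts on the horizontal plane by an orthogonal linear map $R_g\in O(2)$: each of the four rotations and four reflections on the $n\times n$ grid corresponds to the ordinary rotation/reflection of $\R^2$ that fixes the origin. I would then define the lifted isometry $\tilde g\colon\R^3\to\R^3$ by $\tilde g(x,y,z)=(R_g(x,y),\,z)$, which acts on $\R^3$ as a rigid motion (it is orthogonal, since $R_g$ is).

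Next I would verify the two coordinate compatibilities that make the proof tick. For the horizontal coordinates, the centered grid $\{(x_{ij},y_{ij})\}$ is precisely the $D_4$-invariant point set $\{(j-\tfrac{n-1}{2},\tfrac{n-1}{2}-i)\}$, and the index permutation $g$ was chosen exactly so that $(x_{g(i,j)},y_{g(i,j)})=R_g(x_{ij},y_{ij})$. For the vertical coordinate, the definition of $S'$ gives
\[
z_{g(i,j)}(S') \;=\; s'_{g(i,j)} - \tfrac{n^2+1}{2} \;=\; s_{ij} - \tfrac{n^2+1}{2} \;=\; z_{ij}(S).
\]
Combining these two identities yields $\mathbf{p}_{g(i,j)}(S')=\tilde g(\mathbf{p}_{ij}(S))$, so the two point clouds satisfy $P(S')=\tilde g(P(S))$ as unordered sets. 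Since $\tilde g$ is affine and hence commutes with the $\mathrm{conv}$ operator, this gives $\mathcal{G}(S')=\tilde g(\mathcal{G}(S))$, which is the desired invariance (the Magic Gem is determined up to an ambient rigid motion, and in particular the intrinsic polyhedron, its volume, surface area, and face lattice are all unchanged).

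The only real subtlety---which I would flag as the main pitfall rather than a deep obstacle---is bookkeeping: the $D_4$ action on grid indices and its counterpart on $(x,y)$ must be defined consistently (one is the conjugate of the other by the affine bijection $(i,j)\mapsto(x_{ij},y_{ij})$), and the indexing convention $s'_{g(i,j)}=s_{ij}$ (rather than $s'_{ij}=s_{g^{-1}(i,j)}$) must be tracked carefully so that the $z$-coordinate identity above comes out right. Once the two actions are written down in compatible form, the remainder is a one-line calculation, and no properties of the magic-square condition itself are used: the statement is really a statement about $D_4$-equivariance of the construction $S\mapsto\mathcal{G}(S)$, valid for any $n\times n$ array of numbers.
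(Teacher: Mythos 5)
Your proof is correct and takes essentially the same route as the paper's: lift $g\in D_4$ to an isometry of $\R^3$ that fixes the $z$-direction, check that it carries the point cloud of $S$ onto that of $S'$, and invoke the equivariance of the convex-hull operator. You are in fact slightly more careful than the paper on one point: the conclusion is properly $\mathcal{G}(S')=\tilde g(\mathcal{G}(S))$, i.e.\ equality up to an ambient rigid motion (congruence), which is the sense in which the proposition's literal claim $\mathcal{G}(S)=\mathcal{G}(S')$ must be read.
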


\begin{proof}
Each element $g \in D_4$ acts on the grid as an orthogonal transformation of the $(x, y)$ plane,
which extends naturally to an isometry of $\R^3$ that fixes the $z$-axis direction.
Under this action, the $z$-coordinates are permuted according to the relabeling of cells,
but the set of $z$-values remains unchanged.
Since convex hulls are invariant under isometries and depend only on the set of points (not their labeling),
we have $\mathcal{G}(S) = \mathcal{G}(S')$.
\end{proof}

This invariance means that all eight $D_4$ variants of a magic square produce the same Magic Gem.
For $n = 3$, this implies that the single essentially different magic square yields a unique Magic Gem,
while for larger orders, each equivalence class corresponds to a distinct geometric object.

The construction figure (\Cref{fig:construction}, bottom rows) displays all eight symmetry variants of the $3 \times 3$ magic square,
illustrating how these numerically distinct arrangements share a common geometric representation.

\subsection{The Vector Representation}
\label{sec:vector_rep}

The centering of our coordinate system invites an interpretation of each point as a vector from the origin.
Since the origin coincides with the centroid of the point cloud (as we shall verify),
this vector representation captures the deviation of each cell from the ``balanced'' center.

\begin{definition}[Magic Gem Vectors]
The \emph{vector representation} of a Magic Gem is the collection
\[
\mathcal{V}(S) = \left\{ \mathbf{v}_{ij} = \mathbf{p}_{ij} : 0 \leq i, j < n \right\},
\]
where each point is interpreted as a vector emanating from the origin.
\end{definition}

This interpretation connects the Magic Gem to the classical physics of rigid bodies:
if unit masses are placed at each vertex, the resulting configuration has its center of mass at the origin,
and the vectors $\mathbf{v}_{ij}$ describe the displacement of each mass from this center.

\subsubsection{Vectors and Covariance}

The vector representation provides a geometric interpretation of the zero-covariance condition.
Consider the weighted sum of vectors where each vector is weighted by its $x$-coordinate:
\[
\sum_{i,j} x_{ij} \mathbf{v}_{ij} = \sum_{i,j} x_{ij} (x_{ij}, y_{ij}, z_{ij}).
\]
The $z$-component of this weighted sum is precisely $n^2 \cdot \Cov(X, Z)$.
Thus, zero covariance is equivalent to the statement that vectors, 
when weighted by their horizontal positions, have no net vertical tendency.

More explicitly, define the \emph{position-weighted vector sums}:
\begin{align}
\mathbf{W}_x &= \sum_{i,j} x_{ij} \mathbf{v}_{ij}, \label{eq:weighted_x} \\
\mathbf{W}_y &= \sum_{i,j} y_{ij} \mathbf{v}_{ij}. \label{eq:weighted_y}
\end{align}
Then $\Cov(X, Z) = 0$ if and only if $(\mathbf{W}_x)_z = 0$, 
and $\Cov(Y, Z) = 0$ if and only if $(\mathbf{W}_y)_z = 0$.

This formulation reveals that magic squares achieve a form of \emph{directional balance}:
the $z$-values are distributed so that neither horizontal direction 
exhibits any systematic correlation with value magnitude.
The vectors in Figure panels showing the vector representation 
visually manifest this balance---vectors pointing in each horizontal direction 
have $z$-components that ``cancel out'' when weighted by position.

\subsection{The Covariance Structure of Magic Squares}
\label{sec:zero_cov_theorem}

We now establish our main theoretical results, revealing a deep connection 
between the algebraic definition of magic squares and the statistical structure of Magic Gems.

\subsubsection{Forward Direction: Magic Implies Zero Covariance}

The following result holds for all $n \geq 3$ and admits a clean algebraic proof.

\begin{proposition}[Magic Implies Zero Covariance]
\label{prop:magic_implies_zero_cov}
Let $S$ be an $n \times n$ magic square, and let $P = \{\mathbf{p}_{ij}\}$ be the corresponding point cloud. 
Then $\Cov(X, Z) = \Cov(Y, Z) = 0$.
\end{proposition}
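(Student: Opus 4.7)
The plan is to use the fact that the point cloud is already centered so that the covariance reduces to a bare inner product, and then exploit the separability of the $x$- and $y$-coordinates from the cell entries to factor out column sums (respectively row sums), which the magic property fixes at the constant $M(n)$.

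First I would verify the centering. Summing \eqref{eq:x_coord} over all $n^2$ cells gives $n\sum_{j=0}^{n-1}(j-(n-1)/2)=0$, and analogously $\sum_{i,j} y_{ij}=0$; for the heights, $\sum_{i,j} z_{ij}=\sum_{k=1}^{n^2} k - n^2(n^2+1)/2 = 0$ because the entries are a permutation of $\{1,\dots,n^2\}$. Hence the coordinate means $\bar{x},\bar{y},\bar{z}$ all vanish and
\[
\Cov(X,Z) \;=\; \frac{1}{n^2}\sum_{i,j} x_{ij}\, z_{ij},\qquad \Cov(Y,Z) \;=\; \frac{1}{n^2}\sum_{i,j} y_{ij}\, z_{ij}.
\]

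Next I would handle $\Cov(X,Z)$. Since $x_{ij}$ depends only on the column index $j$, I factor the double sum as
\[
\sum_{i,j} x_{ij}\, z_{ij} \;=\; \sum_{j=0}^{n-1}\left(j-\tfrac{n-1}{2}\right)\sum_{i=0}^{n-1}\left(s_{ij}-\tfrac{n^2+1}{2}\right).
\]
The inner sum is the $j$-th column sum of $S$ minus $n(n^2+1)/2$; by the magic property every column sums to $M(n)=n(n^2+1)/2$, so the inner factor is identically zero for every $j$, and the whole expression vanishes. The argument for $\Cov(Y,Z)$ is symmetric: $y_{ij}$ depends only on $i$, so the sum factors through the row sums, each of which equals $M(n)$ and cancels the shift.

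There is essentially no obstacle here; the proof is a two-line calculation once one notices that the centering of the $z$-coordinate by $(n^2+1)/2$ is chosen precisely to cancel against the row/column contribution $n\cdot(n^2+1)/2 = M(n)$. The only thing worth flagging in the write-up is that the argument uses only the row- and column-sum conditions, not the diagonal conditions; this foreshadows the later discussion in which the diagonal terms must be reintroduced by finer indicator variables to obtain an \emph{if and only if} characterization.
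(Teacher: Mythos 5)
Your proposal is correct and follows essentially the same route as the paper: verify that all three coordinate means vanish, reduce the covariance to a bare inner product, and collapse the double sum onto column sums (resp.\ row sums), which the magic property pins at $M(n)$. The only cosmetic difference is that you keep the shift $(n^2+1)/2$ inside the inner sum so each column term vanishes individually, whereas the paper first discards the shift using $\sum_j\bigl(j-\tfrac{n-1}{2}\bigr)=0$ and then factors out the constant $M(n)$ --- the two computations are interchangeable.
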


\begin{proof}
We first verify that the means of all three coordinates vanish.
For the $x$-coordinate,
\[
\bar{X} = \frac{1}{n^2} \sum_{i,j} x_{ij} = \frac{1}{n^2} \sum_{i,j} \left( j - \frac{n-1}{2} \right) 
= \frac{n}{n^2} \sum_{j=0}^{n-1} \left( j - \frac{n-1}{2} \right) = 0,
\]
since $\sum_{j=0}^{n-1} j = n(n-1)/2$. An analogous argument shows $\bar{Y} = 0$, and 
\[
\bar{Z} = \frac{1}{n^2} \sum_{i,j} \left( s_{ij} - \frac{n^2+1}{2} \right) = 0
\]
since the entries sum to $n^2(n^2+1)/2$.

With zero means, $\Cov(X, Z) = \frac{1}{n^2} \sum_{i,j} x_{ij} z_{ij}$.
Define $C_j = \sum_{i=0}^{n-1} s_{ij}$ as the sum of column $j$. Then:
\[
\Cov(X, Z) = \frac{1}{n^2} \sum_{j=0}^{n-1} \left( j - \frac{n-1}{2} \right) C_j.
\]
For a magic square, all column sums equal $M(n)$, so
\[
\Cov(X, Z) = \frac{M(n)}{n^2} \sum_{j=0}^{n-1} \left( j - \frac{n-1}{2} \right) = 0.
\]
The argument for $\Cov(Y, Z) = 0$ via row sums is analogous.
\end{proof}

\begin{corollary}
For any magic square, the covariance matrix of the Magic Gem coordinates has the block-diagonal form
\[
\Sigma = \begin{pmatrix}
\Var(X) & \Cov(X,Y) & 0 \\
\Cov(X,Y) & \Var(Y) & 0 \\
0 & 0 & \Var(Z)
\end{pmatrix}.
\]
In particular, the $z$-coordinate is uncorrelated with both horizontal coordinates.
\end{corollary}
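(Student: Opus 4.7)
The plan is to observe that this corollary is an almost immediate repackaging of \Cref{prop:magic_implies_zero_cov} together with the symmetry of the covariance operator, so the proof is essentially one display plus a sentence.

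First I would write out the general $3 \times 3$ covariance matrix of the coordinates $(X, Y, Z)$ of the Magic Gem point cloud, using only the definition of $\Sigma$ and the identity $\Cov(A, B) = \Cov(B, A)$. That gives the fully populated symmetric matrix
\[
\Sigma = \begin{pmatrix}
\Var(X) & \Cov(X,Y) & \Cov(X,Z) \\
\Cov(X,Y) & \Var(Y) & \Cov(Y,Z) \\
\Cov(X,Z) & \Cov(Y,Z) & \Var(Z)
\end{pmatrix}.
\]
No hypothesis on $S$ is used here; this is just the shape of any coordinate covariance matrix.

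Next I would invoke \Cref{prop:magic_implies_zero_cov}, which under the assumption that $S$ is a magic square gives $\Cov(X, Z) = \Cov(Y, Z) = 0$. Substituting these four zeros (two off-diagonal entries and their transposes) into the matrix above yields exactly the block-diagonal form in the statement, with a $2 \times 2$ block in the $(X, Y)$-plane and a $1 \times 1$ block for $Z$. The final sentence of the corollary---that $Z$ is uncorrelated with both horizontal coordinates---is then just the standard definition of uncorrelatedness (zero covariance) applied to the vanishing entries.

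There is no real obstacle: the only thing to be careful about is not to overclaim. In particular, the corollary does \emph{not} assert that $\Cov(X, Y) = 0$ (and indeed $\Cov(X, Y)$ is generically nonzero for a magic square, since the horizontal layout is fixed and independent of $S$), so I would resist the temptation to diagonalize further. The decomposition is block-diagonal, not diagonal, and that is precisely the content worth recording.
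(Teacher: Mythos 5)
Your proof is correct and is exactly the paper's (implicit) argument: the corollary is stated as an immediate consequence of \Cref{prop:magic_implies_zero_cov}, obtained by substituting $\Cov(X,Z)=\Cov(Y,Z)=0$ into the general symmetric covariance matrix, precisely as you do. One minor slip in your closing aside: $\Cov(X,Y)$ is in fact \emph{always} zero (not generically nonzero), since $X$ depends only on the column index and $Y$ only on the row index, so with zero means the sum factors as $\frac{1}{n^2}\bigl(\sum_j x_j\bigr)\bigl(\sum_i y_i\bigr)=0$; this does not affect the validity of your proof of the stated block-diagonal form, which correctly leaves $\Cov(X,Y)$ symbolic.
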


\subsubsection{On the Insufficiency of Aggregate Covariances}
\label{sec:insufficiency}

It is natural to ask whether the conditions $\Cov(X, Z) = \Cov(Y, Z) = 0$ alone 
suffice to characterize magic squares, or at least semi-magic squares 
(those with equal row and column sums but possibly failing the diagonal conditions).
Perhaps surprisingly, the answer is \emph{no}---and understanding why reveals 
a crucial distinction that motivates our energy functional approach.

\begin{remark}[Why $\Cov(X,Z)=0$ Is Not Enough]
\label{rem:first_moment}
The condition $\Cov(X, Z) = 0$ is equivalent to a single \emph{first-moment} constraint on column sums.
Let $C_j = \sum_{i=0}^{n-1} s_{ij}$ denote the sum of column $j$, and let $\delta_j = C_j - M(n)$ 
be the deviation from the magic constant. Then $\Cov(X, Z) = 0$ is equivalent to
\[
\sum_{j=0}^{n-1} \left( j - \frac{n-1}{2} \right) \delta_j = 0.
\]
Combined with the constraint $\sum_j \delta_j = 0$ (which follows from the total sum being fixed),
this leaves an $(n-2)$-dimensional space of possible deviation patterns.
For $n = 3$, the nonzero deviations must have the form $(\delta_0, \delta_1, \delta_2) = t(1, -2, 1)$;
for $n \geq 4$, the space of ``balanced but unequal'' column patterns grows rapidly.
\end{remark}

\begin{remark}[Counterexamples Exist for $n = 3$]
\label{rem:counterexample}
Exhaustive enumeration for $n = 3$ reveals that \emph{760 arrangements} of $\{1, \ldots, 9\}$ 
satisfy both $\Cov(X, Z) = 0$ and $\Cov(Y, Z) = 0$, yet only 8 of these are magic squares.
For instance, the arrangement
\[
S = \begin{pmatrix}
1 & 3 & 9 \\
8 & 6 & 5 \\
7 & 4 & 2
\end{pmatrix}
\]
has row sums $(13, 19, 13)$ and column sums $(16, 13, 16)$---far from equal---yet 
achieves $\Cov(X, Z) = \Cov(Y, Z) = 0$ exactly.
\end{remark}

\begin{remark}[The Four-Term Energy Is Insufficient for $n \geq 4$]
\label{rem:four_term_insufficient}
Even more strikingly, adding the diagonal covariance conditions does not resolve the issue for $n \geq 4$.
The four covariances $\Cov(X, Z)$, $\Cov(Y, Z)$, $\Cov(D_{\mathrm{main}}, Z)$, $\Cov(D_{\mathrm{anti}}, Z)$
impose only four linear constraints on the $(n^2 - 1)$-dimensional space of mean-zero value functions.
For $n = 4$, this leaves an $(n^2 - 5) = 11$-dimensional nullspace.

Consequently, there exist $4 \times 4$ permutations with all four covariances equal to zero 
that are \emph{not} magic squares. For example:
\[
S = \begin{pmatrix}
9 & 4 & 12 & 3 \\
16 & 1 & 14 & 7 \\
10 & 15 & 11 & 8 \\
2 & 5 & 6 & 13
\end{pmatrix}
\]
has both diagonal sums equal to $34$, but row sums $(28, 38, 44, 26)$ and column sums $(37, 25, 43, 31)$.
All four covariances vanish, yet the arrangement is not magic.
\end{remark}

This analysis reveals that aggregate position coordinates $X$ and $Y$ are too coarse:
they capture only the \emph{first moment} of the row and column sum distributions.
A true characterization requires testing against \emph{individual} row and column indicators,
which we develop in \Cref{sec:energy_landscape}.

\subsection{Diagonal Conditions as Covariances}
\label{sec:diagonal_cov}

The row and column constraints can be elegantly expressed via covariance with position coordinates.
We extend this framework to diagonal constraints by introducing indicator random variables.

\begin{definition}[Diagonal Indicators]
Define indicator variables for the main and anti-diagonals:
\begin{align}
D_{\mathrm{main}}(i,j) &= \begin{cases} 1 & \text{if } i = j, \\ 0 & \text{otherwise}, \end{cases} \\
D_{\mathrm{anti}}(i,j) &= \begin{cases} 1 & \text{if } i + j = n - 1, \\ 0 & \text{otherwise}. \end{cases}
\end{align}
\end{definition}

These indicators allow us to express diagonal constraints in covariance form.

\begin{proposition}[Diagonal Covariance Characterization]
\label{prop:diagonal_cov}
For an arrangement $S$ with centered $z$-values:
\begin{enumerate}
\item $\Cov(D_{\mathrm{main}}, Z) = 0$ if and only if the main diagonal sum equals $M(n)$.
\item $\Cov(D_{\mathrm{anti}}, Z) = 0$ if and only if the anti-diagonal sum equals $M(n)$.
\end{enumerate}
\end{proposition}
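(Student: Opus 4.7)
The plan is to compute both covariances directly from the definition, exploiting the fact that the $z$-centering makes $\bar Z = 0$ so that the covariance collapses to a sum over the indicator's support.

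First I would recall from the proof of \Cref{prop:magic_implies_zero_cov} that $\bar Z = 0$, which holds because of the shift $z_{ij} = s_{ij} - (n^2+1)/2$ and uses only that the entries are a permutation of $\{1,\ldots,n^2\}$. Since the diagonal indicators are $\{0,1\}$-valued, this gives
\[
\Cov(D_{\mathrm{main}}, Z) \;=\; \tfrac{1}{n^2}\sum_{i,j} D_{\mathrm{main}}(i,j)\, z_{ij} \;-\; \bar D_{\mathrm{main}}\cdot \bar Z \;=\; \tfrac{1}{n^2}\sum_{i,j} D_{\mathrm{main}}(i,j)\, z_{ij},
\]
and analogously for $D_{\mathrm{anti}}$. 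Because $D_{\mathrm{main}}$ is supported exactly on the $n$ cells $(i,i)$, the double sum collapses to $\tfrac{1}{n^2}\sum_{i=0}^{n-1} z_{ii}$.

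Next I would undo the $z$-centering to rewrite this in terms of the diagonal sum itself: $\sum_{i=0}^{n-1} z_{ii} = \sum_{i=0}^{n-1} s_{ii} - n\cdot\tfrac{n^2+1}{2} = (\text{main diagonal sum}) - M(n)$. Thus $\Cov(D_{\mathrm{main}}, Z) = 0$ if and only if the main diagonal sums to $M(n)$, which is part (1). The identical argument with $D_{\mathrm{anti}}$ supported on the cells $\{(i, n-1-i)\}_{i=0}^{n-1}$ gives part (2).

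There is no real obstacle here---the result is essentially a bookkeeping identity, with the only substantive observation being that the centering of $Z$ kills the $E[D]\,E[Z]$ product term in the covariance expansion. It is worth emphasizing in the write-up that the statement is \emph{local} to each diagonal: it does not assume any row or column condition, and indeed does not even require $S$ to be a permutation of $\{1,\ldots,n^2\}$---only that the global mean of $Z$ has been subtracted off. This locality is precisely what will allow the diagonal covariance terms to be added as independent nonnegative contributions to the complete energy functional developed in \Cref{sec:zero_cov_theorem}.
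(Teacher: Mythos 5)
Your proposal is correct and follows essentially the same route as the paper's proof: use $\bar Z = 0$ to reduce the covariance to $\tfrac{1}{n^2}\sum_i z_{ii}$, then un-center to see that this equals $\tfrac{1}{n^2}\bigl((\text{diagonal sum}) - M(n)\bigr)$. Your added observation that the argument is local to each diagonal and needs only $\bar Z = 0$ (not the permutation structure) is accurate but does not change the substance.
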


\begin{proof}
The mean of $D_{\mathrm{main}}$ over all cells is $\bar{D}_{\mathrm{main}} = n/n^2 = 1/n$.
With $\bar{Z} = 0$, we have:
\[
\Cov(D_{\mathrm{main}}, Z) = \frac{1}{n^2} \sum_{i,j} D_{\mathrm{main}}(i,j) \cdot z_{ij} - \bar{D}_{\mathrm{main}} \cdot \bar{Z}
= \frac{1}{n^2} \sum_{i=0}^{n-1} z_{ii}.
\]
Since $z_{ii} = s_{ii} - (n^2+1)/2$, the sum $\sum_i z_{ii} = \sum_i s_{ii} - n(n^2+1)/2$.
This vanishes if and only if $\sum_i s_{ii} = M(n)$. The anti-diagonal case is analogous.
\end{proof}

This reformulation expresses the diagonal constraints in the same statistical language as the row/column constraints.
Together with the aggregate position covariances $\Cov(X,Z)$ and $\Cov(Y,Z)$, these diagonal covariances define the
four-term ``low-mode'' energy $E_{\mathrm{low}}$ (\Cref{def:lowmode_energy}).
For $n=3$, $E_{\mathrm{low}}(S)=0$ is already sufficient to characterize magic squares
(verified by exhaustive enumeration; \Cref{prop:lowmode_char}),
but for $n\ge 4$ it is only a necessary relaxation (see \Cref{rem:four_term_insufficient}).
A complete characterization requires testing against individual row and column indicators, developed in \Cref{sec:energy_landscape}.

\subsection{Perturbation Analysis}
\label{sec:perturbation}

The zero-covariance characterization suggests that magic squares occupy special positions 
in the space of all arrangements.
We formalize this intuition through perturbation analysis.

\begin{proposition}[Local Optimality]
\label{prop:local_opt}
Let $S$ be a magic square with corresponding point cloud $P$, 
and let $\tilde{P}$ be obtained by perturbing the $z$-coordinates by amounts $\epsilon_{ij}$.
Define the off-diagonal covariance magnitude
\[
\Phi(\tilde{P}) = \Cov(X, \tilde{Z})^2 + \Cov(Y, \tilde{Z})^2.
\]
Then $\Phi(P) = 0$, and for generic perturbations $\epsilon$, we have $\Phi(\tilde{P}) > 0$.
\end{proposition}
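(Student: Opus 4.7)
The plan is to observe that $\Phi(P)=0$ is immediate from \Cref{prop:magic_implies_zero_cov}, and then to reduce the perturbation claim to a two-dimensional linear-algebra statement in $\R^{n^2}$. The key will be writing the perturbed covariances as linear functionals of $\epsilon$, so that the zero set of $\Phi$ becomes the intersection of two hyperplanes.

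First I would set $\tilde z_{ij} = z_{ij} + \epsilon_{ij}$ and exploit $\bar X = 0$ (already checked in the proof of \Cref{prop:magic_implies_zero_cov}) together with bilinearity of covariance. This gives
\[
\Cov(X,\tilde Z) \;=\; \frac{1}{n^2}\sum_{i,j} x_{ij}\,\tilde z_{ij} \;=\; \Cov(X,Z) + \frac{1}{n^2}\langle x,\epsilon\rangle,
\]
where $x,\epsilon \in \R^{n^2}$ are indexed by cells. Since $S$ is magic, $\Cov(X,Z)=0$, and an identical identity holds for $Y$. Therefore
\[
\Phi(\tilde P) \;=\; \tfrac{1}{n^4}\bigl(\langle x,\epsilon\rangle^{2} + \langle y,\epsilon\rangle^{2}\bigr),
\]
which is a nonnegative quadratic form in $\epsilon$.

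Next I would read off the zero set: $\Phi(\tilde P)=0$ iff $\epsilon$ is orthogonal to both $x$ and $y$. The vectors $x$ and $y$ are linearly independent in $\R^{n^2}$ for any $n\ge 2$, since $x_{ij}$ depends only on the column index $j$ while $y_{ij}$ depends only on the row index $i$, and each is nonconstant. Hence $\mathrm{span}(x,y)$ is two-dimensional and its orthogonal complement has codimension two. The set of perturbations with $\Phi(\tilde P)=0$ is this codimension-two linear subspace, which has Lebesgue measure zero in $\R^{n^2}$ and whose complement is open and dense---the standard meaning of \emph{generic}---so every $\epsilon$ off this subspace yields $\Phi(\tilde P)>0$.

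I do not expect a real obstacle: the argument is elementary once the linear dependence of the perturbed covariances on $\epsilon$ is made explicit. The only point requiring a line of care is formalizing \emph{generic}, and deciding whether to restrict to zero-sum perturbations (the hyperplane $\sum_{i,j}\epsilon_{ij}=0$, which preserves $\bar{\tilde Z}=0$). This restriction is cosmetic, since $x$ and $y$ already lie in that hyperplane (their entries sum to zero), so $\mathrm{span}(x,y)$ remains a codimension-two subspace of the restricted perturbation space and the conclusion is unchanged.
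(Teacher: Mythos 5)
Your argument is correct and complete. Note that the paper itself states \Cref{prop:local_opt} without any proof (it is asserted and then discussed informally), so there is no in-paper argument to compare against; your linearization $\Cov(X,\tilde Z)=\Cov(X,Z)+\tfrac{1}{n^2}\langle x,\epsilon\rangle$ (valid because $\bar X=\bar Y=0$ kills the mean-correction term regardless of whether $\epsilon$ sums to zero), the reduction of the zero set of $\Phi$ to the codimension-two subspace $\{x,y\}^{\perp}$, and the identification of ``generic'' with the complement of that measure-zero subspace together constitute exactly the proof the paper should have supplied. Your closing remark that $x$ and $y$ already have zero entry-sum, so restricting to mean-preserving perturbations changes nothing, is a worthwhile point of care; one could add that $\langle x,y\rangle=\bigl(\sum_j x_j\bigr)\bigl(\sum_i y_i\bigr)=0$, so the two hyperplane constraints are in fact orthogonal, but linear independence is all the conclusion requires.
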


This result shows that magic squares lie at $\Phi=0$, and that generic perturbations of the $z$-values destroy this balance.
Importantly, $\Phi=0$ is \emph{not} unique to magic squares: already for $n=3$ there exist many non-magic arrangements with
$\Cov(X,Z)=\Cov(Y,Z)=0$ (\Cref{rem:counterexample}).
Thus $\Phi$ provides a useful first-order diagnostic of balance, but not a complete characterization.
The complete energy $E_{\mathrm{full}}$ introduced next resolves this by having zeros \emph{exactly} at magic squares for all $n\ge 3$
(\Cref{thm:complete_energy_char}).

\subsection{The Covariance Energy Landscape}
\label{sec:energy_landscape}

The perturbation analysis invites a deeper interpretation of magic squares 
as equilibrium configurations in an energy landscape.
This perspective connects the discrete combinatorics of magic squares 
to continuous optimization and statistical mechanics.

As established in \Cref{sec:insufficiency}, the aggregate covariances with $X$, $Y$, and diagonal indicators
are insufficient to characterize magic squares for $n \geq 4$.
We now develop the \emph{complete} covariance energy using individual row and column indicators.

\subsubsection{Row and Column Indicator Variables}

\begin{definition}[Row and Column Indicators]
For $k, \ell \in \{0, 1, \ldots, n-1\}$, define the indicator variables:
\begin{align}
R_k(i,j) &= \begin{cases} 1 & \text{if } i = k, \\ 0 & \text{otherwise}, \end{cases} \\
C_\ell(i,j) &= \begin{cases} 1 & \text{if } j = \ell, \\ 0 & \text{otherwise}. \end{cases}
\end{align}
These functions select the cells in row $k$ or column $\ell$, respectively.
\end{definition}

\begin{proposition}[Row/Column Covariance Characterization]
\label{prop:rowcol_cov}
For an arrangement $S$ with centered $z$-values:
\begin{enumerate}
\item $\Cov(R_k, Z) = 0$ for all $k \in \{0, \ldots, n-1\}$ if and only if all row sums equal $M(n)$.
\item $\Cov(C_\ell, Z) = 0$ for all $\ell \in \{0, \ldots, n-1\}$ if and only if all column sums equal $M(n)$.
\end{enumerate}
\end{proposition}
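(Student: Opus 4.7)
The plan is to mirror the proof of \Cref{prop:diagonal_cov} almost verbatim, replacing the diagonal indicator by a row or column indicator, and then tracking what each vanishing covariance says at the level of line sums. The essential point is that each indicator $R_k$ (respectively $C_\ell$) is a $\{0,1\}$-valued function supported on exactly $n$ cells out of $n^2$, so its mean over the grid is $\bar{R}_k = n/n^2 = 1/n$, and analogously $\bar{C}_\ell = 1/n$. Because we are working with centered values ($\bar{Z}=0$), the mean-product correction term $\bar{R}_k\bar{Z}$ in the covariance vanishes, and the covariance reduces to a single scaled line sum.

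First, I would carry out the computation for rows. With $\bar{Z}=0$, one has
\[
\Cov(R_k,Z) \;=\; \frac{1}{n^2}\sum_{i,j} R_k(i,j)\, z_{ij} \;=\; \frac{1}{n^2}\sum_{j=0}^{n-1} z_{kj}.
\]
Substituting $z_{kj} = s_{kj} - (n^2+1)/2$ gives
\[
\sum_{j=0}^{n-1} z_{kj} \;=\; \left(\sum_{j=0}^{n-1} s_{kj}\right) - \frac{n(n^2+1)}{2} \;=\; R_k\text{-sum}(S) - M(n),
\]
where $R_k\text{-sum}(S)$ denotes the sum of entries in row $k$. Hence $\Cov(R_k,Z)=0$ is equivalent to the single equation $R_k\text{-sum}(S)=M(n)$. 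Quantifying over all $k \in \{0,\ldots,n-1\}$ gives part (1). Part (2) follows by the symmetric computation, replacing $R_k$ by $C_\ell$, the inner sum by $\sum_{i=0}^{n-1} z_{i\ell}$, and row sums by column sums.

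Since the argument is a direct adaptation of \Cref{prop:diagonal_cov}, there is no real obstacle; the only thing to be careful about is not confusing the \emph{variable name} $C_\ell$ (column indicator) with the \emph{value} $C_j$ used earlier in \Cref{rem:first_moment} to denote a column sum. I would therefore write the proof using the unambiguous phrase ``sum of row $k$'' and ``sum of column $\ell$'' rather than reusing symbols. The proof is short and entirely parallel to the diagonal case, so I would present both parts together with a single displayed computation and remark that the column case is obtained by interchanging the roles of $i$ and $j$. The conceptual payoff to emphasize afterwards is the contrast with \Cref{rem:first_moment}: the single aggregate covariance $\Cov(X,Z)$ is one linear condition on the $n$-tuple of column deviations, whereas the $n$ covariances $\Cov(C_\ell,Z)$ pin down each column sum individually, which is exactly what is needed to upgrade the low-mode relaxation to a true characterization.
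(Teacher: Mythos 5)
Your proof is correct and follows essentially the same route as the paper's: compute $\bar{R}_k = 1/n$, note the mean-product term drops because $\bar{Z}=0$, reduce $\Cov(R_k,Z)$ to $\frac{1}{n^2}\sum_j z_{kj}$, and observe this vanishes exactly when row $k$ sums to $M(n)$, with the column case symmetric. The extra care about notation and the closing remark contrasting with \Cref{rem:first_moment} are sensible additions but do not change the argument.
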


\begin{proof}
The mean of $R_k$ over all cells is $\bar{R}_k = n/n^2 = 1/n$.
With $\bar{Z} = 0$, we have:
\[
\Cov(R_k, Z) = \frac{1}{n^2} \sum_{i,j} R_k(i,j) \cdot z_{ij} = \frac{1}{n^2} \sum_{j=0}^{n-1} z_{kj}.
\]
Since $z_{kj} = s_{kj} - (n^2+1)/2$, the covariance vanishes if and only if 
$\sum_j s_{kj} = n(n^2+1)/2 = M(n)$, i.e., row $k$ sums to the magic constant.
The column case is analogous.
\end{proof}

\subsubsection{The Complete Magic Energy Functional}

\begin{definition}[Complete Covariance Energy]
\label{def:complete_energy}
For an arrangement $S$, define the \emph{complete magic energy} as:
\begin{equation}
E_{\mathrm{full}}(S) = \sum_{k=0}^{n-2} \Cov(R_k, Z)^2 + \sum_{\ell=0}^{n-2} \Cov(C_\ell, Z)^2 
+ \Cov(D_{\mathrm{main}}, Z)^2 + \Cov(D_{\mathrm{anti}}, Z)^2.
\label{eq:full_energy}
\end{equation}
This involves $2(n-1) + 2 = 2n$ terms (one row/column indicator is redundant due to the fixed total sum).
\end{definition}

\begin{theorem}[Complete Energy Characterization]
\label{thm:complete_energy_char}
For all $n \geq 3$, an arrangement $S$ of $\{1, \ldots, n^2\}$ is a magic square 
if and only if $E_{\mathrm{full}}(S) = 0$.
\end{theorem}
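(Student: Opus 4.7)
The theorem is a direct packaging of the covariance characterizations already proved, plus one small linear-algebra observation that justifies dropping the ``$k=n-1$'' row term and the ``$\ell=n-1$'' column term from the sum. My plan is to treat the two directions symmetrically by noting that $E_{\mathrm{full}}$ is a sum of squares of real numbers, so $E_{\mathrm{full}}(S)=0$ is equivalent to every summand vanishing, and then match each summand against the appropriate covariance characterization.

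\textbf{Step 1 (easy direction).} Assume $S$ is a magic square. By \Cref{prop:rowcol_cov}, $\Cov(R_k,Z)=0$ for every $k$, so in particular for $k=0,\dots,n-2$; likewise $\Cov(C_\ell,Z)=0$ for $\ell=0,\dots,n-2$. By \Cref{prop:diagonal_cov}, $\Cov(D_{\mathrm{main}},Z)=\Cov(D_{\mathrm{anti}},Z)=0$. Substituting into \eqref{eq:full_energy} gives $E_{\mathrm{full}}(S)=0$.

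\textbf{Step 2 (converse direction).} Assume $E_{\mathrm{full}}(S)=0$. Since each summand is nonnegative, I get $\Cov(R_k,Z)=0$ for $k=0,\dots,n-2$, $\Cov(C_\ell,Z)=0$ for $\ell=0,\dots,n-2$, and both diagonal covariances vanish. The key observation to recover the missing row (resp.\ column) is that the indicators satisfy the partition-of-unity identity
\[
\sum_{k=0}^{n-1} R_k(i,j) = 1 \quad \text{for every cell } (i,j),
\]
and $Z$ has mean zero because the entries $\{1,\dots,n^2\}$ sum to $n^2(n^2+1)/2$. Using bilinearity of covariance and $\Cov(\mathrm{const},Z)=0$,
\[
0 \;=\; \Cov\!\Bigl(\sum_{k=0}^{n-1} R_k,\,Z\Bigr) \;=\; \sum_{k=0}^{n-1} \Cov(R_k,Z),
\]
so $\Cov(R_{n-1},Z) = -\sum_{k=0}^{n-2}\Cov(R_k,Z) = 0$. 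The same argument with $\sum_\ell C_\ell=1$ gives $\Cov(C_{n-1},Z)=0$. Now \Cref{prop:rowcol_cov} yields that every row sum and every column sum equals $M(n)$, and \Cref{prop:diagonal_cov} yields that both diagonal sums equal $M(n)$. Combined with the fact that $S$ is a permutation of $\{1,\dots,n^2\}$, this is exactly the definition of a magic square.

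\textbf{Main obstacle.} The only nontrivial point is justifying the ``$2(n-1)+2 = 2n$'' count in \Cref{def:complete_energy}, i.e., showing that dropping one row and one column indicator does not weaken the characterization. This is handled by the partition-of-unity / bilinearity argument in Step 2, which I would state explicitly as a short lemma or as an inline remark so that the proof is transparent. Beyond that, the theorem is a clean assembly of \Cref{prop:rowcol_cov} and \Cref{prop:diagonal_cov}; no genuinely new computation is required.
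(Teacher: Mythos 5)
Your proposal is correct and takes essentially the same route as the paper: both directions reduce to \Cref{prop:rowcol_cov} and \Cref{prop:diagonal_cov}, with the omitted row and column indicators recovered from the fixed total sum. The only (cosmetic) difference is that you recover $\Cov(R_{n-1},Z)=0$ via the partition-of-unity identity $\sum_k \Cov(R_k,Z)=\Cov(1,Z)=0$, whereas the paper argues directly that rows $0,\dots,n-2$ summing to $M(n)$ together with the total sum $nM(n)$ force row $n-1$ to sum to $M(n)$ as well---two equivalent bookkeepings of the same redundancy.
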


\begin{proof}
\textbf{Forward direction:} If $S$ is a magic square, all rows, columns, and both diagonals sum to $M(n)$.
By \Cref{prop:rowcol_cov} and \Cref{prop:diagonal_cov}, all covariances in \eqref{eq:full_energy} vanish,
hence $E_{\mathrm{full}}(S) = 0$.

\textbf{Converse:} If $E_{\mathrm{full}}(S) = 0$, then each squared covariance term must be zero 
(as they are all non-negative). By \Cref{prop:rowcol_cov}, $\Cov(R_k, Z) = 0$ for $k = 0, \ldots, n-2$
implies rows $0$ through $n-2$ sum to $M(n)$; since the total sum is fixed at $n^2(n^2+1)/2 = nM(n)$,
row $n-1$ also sums to $M(n)$. Similarly, all columns sum to $M(n)$.
By \Cref{prop:diagonal_cov}, both diagonals sum to $M(n)$. Thus $S$ is a magic square.
\end{proof}

This theorem establishes a \emph{complete characterization}: magic squares are precisely the zeros 
of the energy functional $E_{\mathrm{full}}$, valid for all orders $n \geq 3$.
The energy landscape interpretation is now rigorous: magic squares are the unique ground states.

\subsubsection{The Low-Mode Relaxation}

For comparison, we also define the simpler four-term energy:

\begin{definition}[Low-Mode Energy]
\label{def:lowmode_energy}
The \emph{low-mode energy} uses aggregate position coordinates:
\begin{equation}
E_{\mathrm{low}}(S) = \Cov(X, Z)^2 + \Cov(Y, Z)^2 + \Cov(D_{\mathrm{main}}, Z)^2 + \Cov(D_{\mathrm{anti}}, Z)^2.
\label{eq:lowmode_energy}
\end{equation}
\end{definition}

\begin{proposition}[Low-Mode Characterization]
\label{prop:lowmode_char}
For $n = 3$, $E_{\mathrm{low}}(S) = 0$ if and only if $S$ is a magic square 
(verified by exhaustive enumeration of all $362{,}880$ permutations).
For $n \geq 4$, the condition $E_{\mathrm{low}}(S) = 0$ defines a strictly larger class 
of ``low-mode balanced'' arrangements that properly contains the magic squares.
\end{proposition}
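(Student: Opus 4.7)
The plan is to handle the forward direction uniformly across $n$ and then split the converse into the two cases $n=3$ and $n\ge 4$. The forward direction is immediate: if $S$ is magic, \Cref{prop:magic_implies_zero_cov} gives $\Cov(X,Z)=\Cov(Y,Z)=0$ and \Cref{prop:diagonal_cov} gives both diagonal covariances vanish, so all four summands of $E_{\mathrm{low}}$ are zero. So the work lies in the two converse statements.

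For the $n=3$ converse, I will follow the paper's approach and rely on exhaustive enumeration: the arrangement space is the finite set of $9!=362{,}880$ permutations of $\{1,\ldots,9\}$, and for each one the four covariances can be evaluated and the zero set checked to consist of exactly the eight Lo Shu variants. An algebraic route is also possible: at $n=3$, $\Cov(X,Z)=0$ reduces to $C_0=C_2$ and $\Cov(Y,Z)=0$ to $R_0=R_2$ (with $C_j,R_i$ denoting column/row sums); combining these with the two diagonal equations, the total sum $45$, and the permutation constraint should force all rows and columns to equal $15$. Carrying that out would proceed by casework on the center value $s_{11}$ (through which both diagonals pass) and on the four corners.

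For $n\ge 4$ the strict containment $\{\text{magic}\}\subsetneq\{E_{\mathrm{low}}=0\}$ has its nontrivial direction witnessed for $n=4$ by the explicit arrangement in \Cref{rem:four_term_insufficient}, which has $E_{\mathrm{low}}=0$ but unequal row and column sums. To extend to $n\ge 5$, I will construct counterexamples either by (i) embedding the $4\times 4$ counterexample in a corner block of an $n\times n$ arrangement and filling the remaining cells with values chosen to preserve the four aggregate balances, or (ii) applying a balanced double swap to a known $n\times n$ magic square: exchange two disjoint pairs of entries whose contributions to $\Cov(X,Z)$, $\Cov(Y,Z)$, and both diagonal sums cancel, while altering at least one individual row or column sum. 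Either construction produces a permutation with $E_{\mathrm{low}}=0$ that is not magic, completing the strict inclusion.

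The main obstacle is construction (ii) at general $n\ge 5$. A dimension count shows the real nullspace of the four low-mode constraints inside the mean-zero value space has dimension $n^2-5\ge 20$, so continuous counterexamples are abundant; the difficulty is to exhibit a \emph{permutation}-valued point in that nullspace which is not magic. The cleanest path is to locate index pairs whose swap contributions cancel exactly in all four aggregate quantities, ideally by choosing cells symmetric about the center (which automatically balances the $X$- and $Y$-moments) and avoiding both diagonals; a finite verification for small $n$ combined with a general arithmetic construction of such a swap for arbitrary $n\ge 5$ then handles all remaining orders.
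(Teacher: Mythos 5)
Your treatment of the forward direction and of $n=3$ coincides with the paper's: magic implies $E_{\mathrm{low}}=0$ by \Cref{prop:magic_implies_zero_cov} and \Cref{prop:diagonal_cov}, and the $n=3$ converse is settled by exhaustive enumeration of all $362{,}880$ permutations (the paper reports exactly $8$ zeros, all Lo Shu variants). For $n\ge 4$ the paper does no more than you do for $n=4$: it exhibits the explicit counterexample of \Cref{rem:four_term_insufficient} and supports the general claim with the $(n^2-5)$-dimensional real nullspace count, which, as you correctly observe, does not by itself produce a \emph{permutation}-valued, non-magic zero. Your attempt to supply explicit witnesses for every $n\ge 5$ is therefore more ambitious than the paper's own argument, and that is where the one real flaw sits.

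Specifically, your claim that choosing swap cells symmetric about the center ``automatically balances the $X$- and $Y$-moments'' is false: swapping the values at $(a,b)$ and $(n-1-a,n-1-b)$ changes $\sum_{i,j}x_{ij}z_{ij}$ by $(x_{ab}-x_{cd})(z_{cd}-z_{ab}) = 2x_{ab}(z_{cd}-z_{ab})$, which is generically nonzero. Worse, any collection of swaps confined to two rows $i_1,i_2$ that preserves $\Cov(Y,Z)$ must satisfy $(y_{i_1}-y_{i_2})\,\Delta R_{i_1}=0$ (since $\Delta R_{i_1}+\Delta R_{i_2}=0$), hence preserves both row sums --- so a ``balanced double swap'' touching only two rows can never break the semi-magic property while keeping $\Cov(Y,Z)=0$. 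The repair is forced by your own \Cref{rem:first_moment}: the row-sum (or column-sum) deviations must span at least three lines in a pattern such as $(1,-2,1)$, e.g.\ two in-column swaps arranged as $(0,j)\leftrightarrow(1,j)$ and $(1,j')\leftrightarrow(2,j')$ with $s_{1j}-s_{0j}=s_{1j'}-s_{2j'}\neq 0$ and all four cells off both diagonals; alternatively your construction (i), padding the $4\times4$ witness, avoids the issue entirely. With that correction the proposal closes a case the paper itself only asserts.
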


The low-mode energy is computationally simpler and provides a useful first-order diagnostic,
but it cannot serve as a characterization for $n \geq 4$.
The complete energy $E_{\mathrm{full}}$ is required for the full characterization.

This characterization reveals magic squares as \emph{ground states} of a natural energy functional.
Just as physical systems minimize their energy to reach equilibrium,
arrangements of integers ``settle'' into magic configurations when the imbalance energy vanishes.
The analogy extends further: the discrete nature of permutations creates a ``rough landscape'' 
with isolated minima, making magic squares rare and difficult to find.

\begin{proposition}[Isolation of Magic Squares]
\label{prop:isolation}
For $n = 3$ and $n = 4$, every magic square is a strict local minimum of $E_{\mathrm{full}}$ 
over the discrete space of permutations.
That is, for any magic square $S$ and any arrangement $S'$ differing from $S$ 
by a single transposition of two elements, we have $E_{\mathrm{full}}(S') > E_{\mathrm{full}}(S) = 0$.
\end{proposition}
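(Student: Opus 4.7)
The plan is to leverage \Cref{thm:complete_energy_char}: since every term in $E_{\mathrm{full}}$ is a square, $E_{\mathrm{full}}(S')\ge 0$ always, with equality exactly when $S'$ is a magic square. Proving the strict inequality $E_{\mathrm{full}}(S') > 0$ therefore reduces to the purely combinatorial claim that no single transposition of two distinct cells of a magic square produces another magic square. In other words, the ``isolation'' half of the statement is carried entirely by the characterization theorem; what remains is a line-sum argument.

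To establish this combinatorial claim I would fix a magic square $S$ and a transposition swapping positions $(i_1,j_1)$ and $(i_2,j_2)$ with respective values $a$ and $b$. Distinctness $a\ne b$ is automatic because the entries of $S$ are a permutation of $\{1,\dots,n^2\}$. A short case split on whether the two cells share a row then finishes the argument. If $i_1\ne i_2$, the sum of row $i_1$ changes by exactly $b-a\ne 0$, so row $i_1$ of $S'$ no longer equals $M(n)$. If instead $i_1=i_2$, then necessarily $j_1\ne j_2$, and the same observation applied to columns shows that the sum of column $j_1$ changes by $b-a\ne 0$. In either case $S'$ fails at least one row or column sum, so by \Cref{thm:complete_energy_char} we obtain $E_{\mathrm{full}}(S') > 0$.

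The main obstacle is essentially only bookkeeping: one might briefly worry about diagonal-preserving symmetric swaps or about some coincidental cancellation across several lines, but with only two cells disturbed such cancellations cannot arise --- the change in row $i_1$ (or column $j_1$) is literally $b-a$ and depends on no other entries of $S$. No second-order analysis or eigenvalue argument is required, because the discrete neighborhood of $S$ under transpositions is combinatorially finite and fully described by the pair $(a,b)$ and the two cell positions. In particular, no hypothesis beyond $a\ne b$ is used, so although the proposition is stated for $n\in\{3,4\}$, the argument extends verbatim to every $n\ge 3$; one could present the stated range as the regime in which the conclusion has also been checked by direct enumeration, consistent with the computational experiments reported elsewhere in the paper.
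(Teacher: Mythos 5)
Your proof is correct and takes essentially the same route as the paper, whose ``proof'' is only the one-line assertion that swapping any two elements necessarily destroys the equal-sum property; your case split on whether the two swapped cells share a row supplies exactly the combinatorial argument that assertion needs, and the appeal to \Cref{thm:complete_energy_char} correctly converts the broken line sum into $E_{\mathrm{full}}(S')>0$. As you observe, nothing in the argument depends on $n$, so it in fact establishes the proposition for all $n \geq 3$, not just the stated $n \in \{3,4\}$.
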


The proof follows from direct computation: swapping any two elements in a magic square 
necessarily destroys the equal-sum property, introducing nonzero covariance.
This isolation property explains the rarity of magic squares:
they occupy zero-energy wells surrounded by arrangements of strictly positive energy,
with no gradual path between magic configurations.

\subsection{The Moment of Inertia Tensor}
\label{sec:inertia}

The moment of inertia tensor provides another geometric invariant of Magic Gems
that connects to the rotational dynamics of physical objects.

For a Magic Gem with equal unit masses at each vertex, the inertia tensor takes the form
\[
I = \sum_{i,j} \begin{pmatrix}
y_{ij}^2 + z_{ij}^2 & -x_{ij}y_{ij} & -x_{ij}z_{ij} \\
-x_{ij}y_{ij} & x_{ij}^2 + z_{ij}^2 & -y_{ij}z_{ij} \\
-x_{ij}z_{ij} & -y_{ij}z_{ij} & x_{ij}^2 + y_{ij}^2
\end{pmatrix}.
\]

The eigenvalues of $I$---the principal moments of inertia---are invariant under rotation
and characterize the intrinsic shape of the mass distribution.
By \Cref{prop:magic_implies_zero_cov}, the off-diagonal elements $I_{xz} = -n^2 \Cov(X,Z)$ and $I_{yz} = -n^2 \Cov(Y,Z)$ 
vanish for magic squares, simplifying the tensor structure.

This connection to rigid body mechanics suggests physical interpretations of the Magic Gem:
a solid object with mass concentrated at the Magic Gem vertices would exhibit balanced rotational properties
reflecting the underlying magic square structure.

\section{Computational Results}
\label{sec:results}

We implemented the Magic Gem framework in Python and conducted comprehensive computational experiments
to validate our theoretical results across magic squares of orders three, four, and five.
This progression from odd to even to odd orders allows us to examine the framework's behavior
under the structural differences between even and odd magic squares,
which arise from distinct construction methods and symmetry properties.

\subsection{The 3×3 Case: Foundation and Validation}

The $3 \times 3$ magic square provides the ideal starting point for validating our framework,
as it admits a unique solution (up to symmetry) and allows exhaustive analysis.
The small size of the arrangement space---$9! = 362{,}880$ permutations---permits 
complete enumeration, providing definitive verification of the low-mode characterization for $n=3$
(\Cref{prop:lowmode_char}) and a baseline for the energy landscape viewpoint.

\subsubsection{Exhaustive Verification of the Energy Characterization}

We computed the low-mode energy $E_{\mathrm{low}}(S)$ for all 362,880 permutations of $\{1, \ldots, 9\}$.
This exhaustive analysis confirms the Low-Mode Characterization (\Cref{prop:lowmode_char}) for $n=3$:
an arrangement has zero low-mode energy if and only if it is a magic square.

The energy distribution across all permutations (see \Cref{fig:energy_landscapes_comparison}) confirms this characterization.
The vast majority of arrangements exhibit substantial positive energy,
with the distribution spanning from zero to a maximum of about 12 (maximum observed: 12.10).
Precisely eight configurations---the Lo Shu and its seven $D_4$ symmetry variants---achieve 
$E_{\mathrm{low}}(S) = 0$ exactly, corresponding to the single essentially different magic square.
No false positives (zero energy but non-magic) or false negatives (magic but nonzero energy) exist,
providing complete verification of the theorem for $n = 3$.

The energy landscape reveals the exceptional nature of magic squares.
While generic permutations cluster around a mode near 3.12 and have mean energy $\approx 4.44$,
magic squares are isolated at the absolute minimum.
The gap between zero and the smallest positive energy provides a quantitative measure 
of the ``stability'' of the magic property
(see \Cref{fig:energy_landscapes_comparison} in \Cref{sec:large_scale_landscape}).

\subsubsection{Perturbation Isolation Analysis}

To verify that magic squares are strict local minima, we analyzed the effect of single-swap perturbations.
For each of the eight magic squares, we computed the energy after every possible swap 
of two elements, yielding $\binom{9}{2} = 36$ perturbed arrangements per square.

\Cref{fig:perturbation_gaps} presents a comprehensive nine-panel analysis of perturbation gaps
across orders $n=3,4,5$, testing \emph{all} magic squares for $n=3$ and $n=4$
(all 8 variants of the Lo Shu for $n=3$ and all 880 $n=4$ equivalence classes under $D_4$),
and 35 representative $n=5$ squares generated via the Siamese method with $D_4$ symmetries.
Across all 116,388 tested perturbations, every single swap increases the energy,
providing strong numerical evidence that magic squares are isolated local minima in the covariance landscape.

The analysis reveals several interesting findings.
First, for $n=3$, all eight magic squares exhibit \emph{exactly the same minimum gap}:
$\Delta_3 = 0.0988$.
This remarkable invariance arises because the perturbation gap is determined solely 
by the embedding geometry, which is preserved under $D_4$ symmetries.
Second, the 880 $n=4$ magic squares cluster into three distinct bands of perturbation resistance
(visible in the middle-right panel as horizontal stratification):
highly isolated squares ($\Delta \approx 0.004$--$0.008$),
moderately isolated ($\Delta \approx 0.010$--$0.020$), 
and weakly isolated ($\Delta \approx 0.025$--$0.040$).
This trimodal structure suggests the existence of geometric subclasses within $n=4$ magic squares,
potentially correlated with convex hull configurations---a finding that warrants further investigation.
Third, the minimum gap scales approximately as $\Delta \propto 1/n^2$,
consistent with the quadratic scaling of peak energy observed in the large-scale analysis.

\begin{figure}[H]
    \centering
    \includegraphics[width=\textwidth]{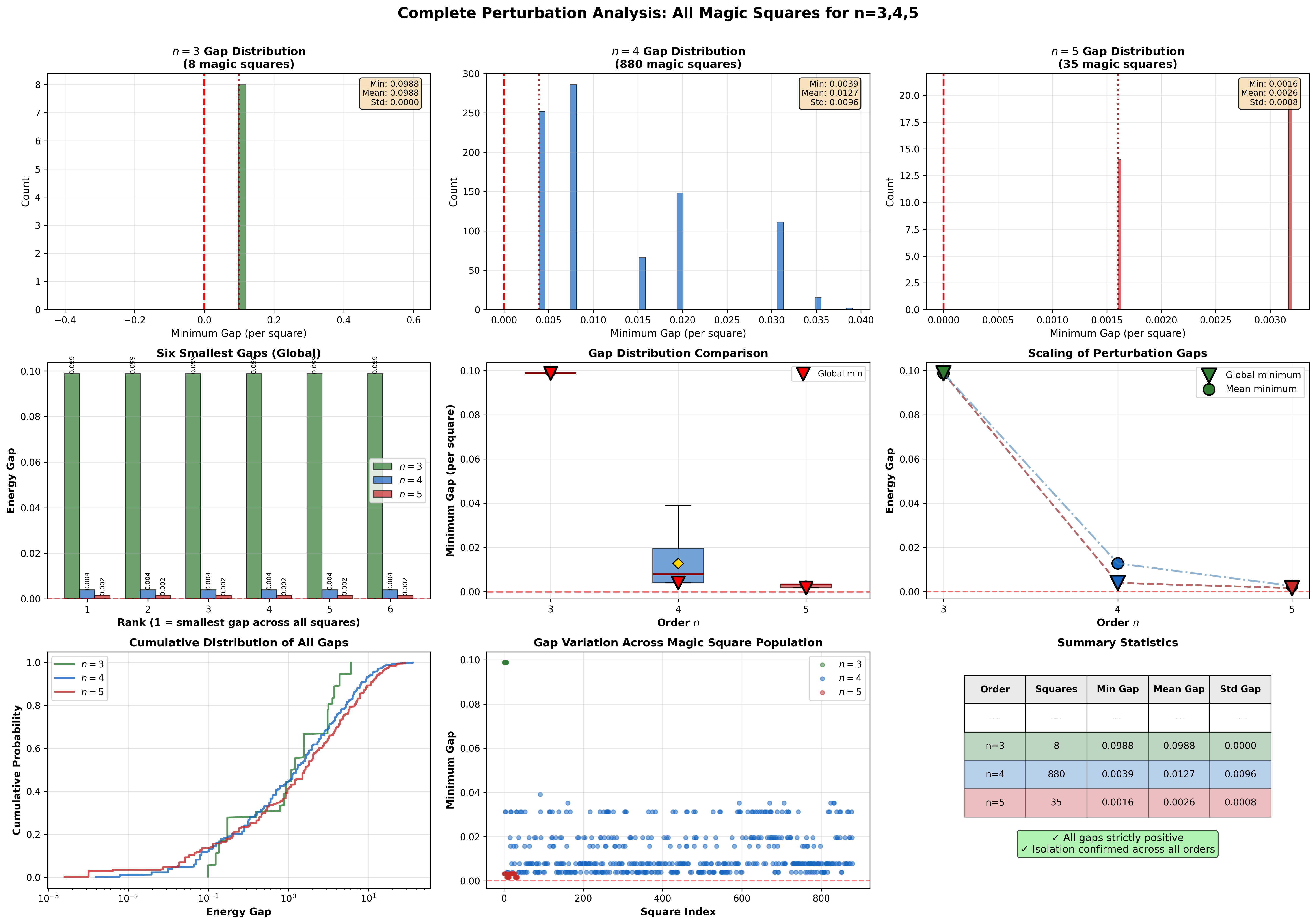}
    \caption{\textbf{Perturbation Analysis: Magic Squares as Isolated Minima.} 
    Analysis of single-swap perturbations for magic squares using the complete energy $E_{\mathrm{full}}$: 
    $n=3$ (all 8 $D_4$ variants, 36 swaps each), 
    $n=4$ (all 880 equivalence-class representatives, 120 swaps each), 
    and $n=5$ (35 representative squares, 300 swaps each). 
    \textbf{Top row:} Minimum gap distributions reveal geometric invariance for $n=3$ 
    (all squares identical, $\Delta_3 = 0.0988$ exactly), substantial heterogeneity for $n=4$, 
    and clustering for $n=5$. 
    \textbf{Middle row:} (Left) Top 6 smallest gaps showing high degeneracy---many swaps 
    produce identical energy increases. (Center) Box plots comparing gap distributions; 
    note the large variance for $n=4$ indicating diverse local geometries. 
    (Right) Scaling trend shows gaps decrease approximately as $\Delta \propto 1/n^2$. 
    \textbf{Bottom row:} (Left) Cumulative distributions exhibit similar sigmoid forms on log scale. 
    (Center) Gap variation across magic square population reveals \emph{three distinct bands} 
    for $n=4$, suggesting geometric subclasses with different perturbation resistance. 
    (Right) Summary statistics. 
    All 116,388 tested perturbations strictly increased energy, 
    supporting the conclusion that magic squares are isolated local minima of $E_{\mathrm{full}}$.}
    \label{fig:perturbation_gaps}
\end{figure}

\Cref{tab:perturbation_stats} summarizes the perturbation gap statistics.
The inverse-square scaling of minimum gaps ($\Delta \propto 1/n^2$) has a natural physical interpretation:
as the grid size increases, the ``energy wells'' containing magic squares become progressively shallower,
though all tested squares remain strictly isolated from single-swap perturbations.

\begin{table}[H]
\centering
\begin{tabular}{lccccc}
\toprule
$n$ & Squares & Swaps Tested & Min Gap & Mean Gap & Std Gap \\
\midrule
3 & 8 & 288 & 0.0988 & 0.0988 & 0.0000 \\
4 & 880 & 105,600 & 0.0039 & 0.0127 & 0.0096 \\
5 & 35 & 10,500 & 0.0016 & 0.0026 & 0.0008 \\
\midrule
\multicolumn{2}{l}{\textit{Total}} & 116,388 & --- & --- & --- \\
\bottomrule
\end{tabular}
\caption{Perturbation gap statistics from comprehensive analysis of all $n=3$ and $n=4$ magic squares
and a representative sample for $n=5$.
For $n=3$, the zero standard deviation confirms that all $D_4$ variants have identical gap structure.
The minimum gap scales approximately as $\Delta_{\min} \propto 1/n^2$.}
\label{tab:perturbation_stats}
\end{table}

\subsubsection{Covariance Verification and Geometric Structure}

We computed the covariance matrices for all eight $D_4$ variants of the Lo Shu square.
As predicted by \Cref{prop:magic_implies_zero_cov}, every variant achieves
$\Cov(X, Z) = \Cov(Y, Z) = 0$ to machine precision.

Notably, exhaustive enumeration reveals that 760 arrangements of $\{1, \ldots, 9\}$ 
satisfy $\Cov(X, Z) = \Cov(Y, Z) = 0$, yet only 8 of these are magic squares.
The remaining 752 arrangements fail the diagonal conditions or have unequal row/column sums
despite achieving zero row and column covariance.
This confirms that \emph{all four} covariance conditions (including diagonals), 
not merely the row and column covariances, are required to characterize magic squares.

To contextualize this result, we generated 5,000 random arrangements 
by permuting the integers 1 through 9 and computed covariances for each.

The $3 \times 3$ Magic Gem comprises nine points, of which eight lie on the convex hull
while the ninth---corresponding to the center cell with value 5---occupies the interior at the origin.
\Cref{fig:3x3_analysis} presents a comprehensive four-panel analysis of the geometric 
and statistical structure.

The vector representation (panel b) directly manifests the zero-covariance property.
Each vector $\mathbf{v}_{ij} = (x_{ij}, y_{ij}, z_{ij})$ emanates from the origin.
The position-weighted sum $\sum_{i,j} x_{ij} \mathbf{v}_{ij}$ has $z$-component equal to 
$n^2 \cdot \Cov(X,Z) = 0$, meaning vectors weighted by their horizontal positions 
exhibit no net vertical tendency---a visual confirmation of the statistical balance.

Furthermore, the moment of inertia tensor for the Magic Gem (treating each vertex as unit mass) 
has the form where the off-diagonal elements $I_{xz} = -n^2 \Cov(X,Z) = 0$ and $I_{yz} = -n^2 \Cov(Y,Z) = 0$
vanish identically. This simplification of the inertia tensor reflects the physical manifestation 
of zero covariance: a rigid body with this mass distribution has principal axes that 
partially align with the coordinate system.

\subsubsection{Extended Statistical Properties}

Beyond the zero-covariance result, the Magic Gem coordinates exhibit additional statistical regularities
that follow from the magic square structure.

\paragraph{Exact Variance Formulas.}
The variances of the Magic Gem coordinates admit exact closed forms.
Since the $z$-values are a permutation of centered integers $\{1-\mu, \ldots, n^2-\mu\}$ 
where $\mu = (n^2+1)/2$, and the $x$- and $y$-coordinates form symmetric grids,
standard calculations yield:
\begin{equation}
\Var(Z) = \frac{n^4 - 1}{12}, \qquad \Var(X) = \Var(Y) = \frac{n^2 - 1}{12}.
\end{equation}
We verified these formulas computationally for $n = 3, 4, 5, 7$ with exact agreement.
Combined with the zero off-diagonal covariances from \Cref{prop:magic_implies_zero_cov},
this completely determines the covariance matrix for Magic Gems of any order.

\paragraph{Higher-Order Moment Vanishing.}
The zero-covariance property $\mathbb{E}[XZ] = \mathbb{E}[YZ] = 0$ extends to all higher powers.
Since $X$ depends only on the column index $j$, we have
\[
\mathbb{E}[X^k Z] = \frac{1}{n^2} \sum_j X_j^k \cdot \left( \sum_i Z_{ij} \right) 
= \frac{1}{n^2} \sum_j X_j^k \cdot (C_j - n\mu),
\]
where $C_j$ is the sum of column $j$.
For magic squares, $C_j = M(n) = n\mu$ for all $j$, so $\mathbb{E}[X^k Z] = 0$ for any power $k$.
Similarly, $\mathbb{E}[Y^k Z] = 0$ follows from equal row sums.
We verified $\mathbb{E}[X^2 Z] = \mathbb{E}[Y^2 Z] = \mathbb{E}[X^3 Z] = \mathbb{E}[Y^3 Z] = 0$
for all tested magic squares across orders $n = 3, 4, 5, 7$.

\paragraph{Energy Landscape Local Minima.}
Exhaustive enumeration for $n = 3$ reveals the fine structure of the energy landscape.
Under the complete energy $E_{\mathrm{full}}$, there are exactly \textbf{24 local minima}:
8 global minima (the magic squares with $E_{\mathrm{full}} = 0$) and 16 non-global local minima
where no single swap decreases energy but $E_{\mathrm{full}} > 0$.
Under the simpler low-mode energy $E_{\mathrm{low}}$, the landscape is more rugged,
with \textbf{344 local minima} (8 global, 336 non-global).
This difference reflects the fact that $E_{\mathrm{full}}$ has more terms ($2n$ vs.\ 4),
providing more ``escape directions'' from non-magic configurations.

\begin{figure}[H]
    \centering
    \includegraphics[width=\textwidth]{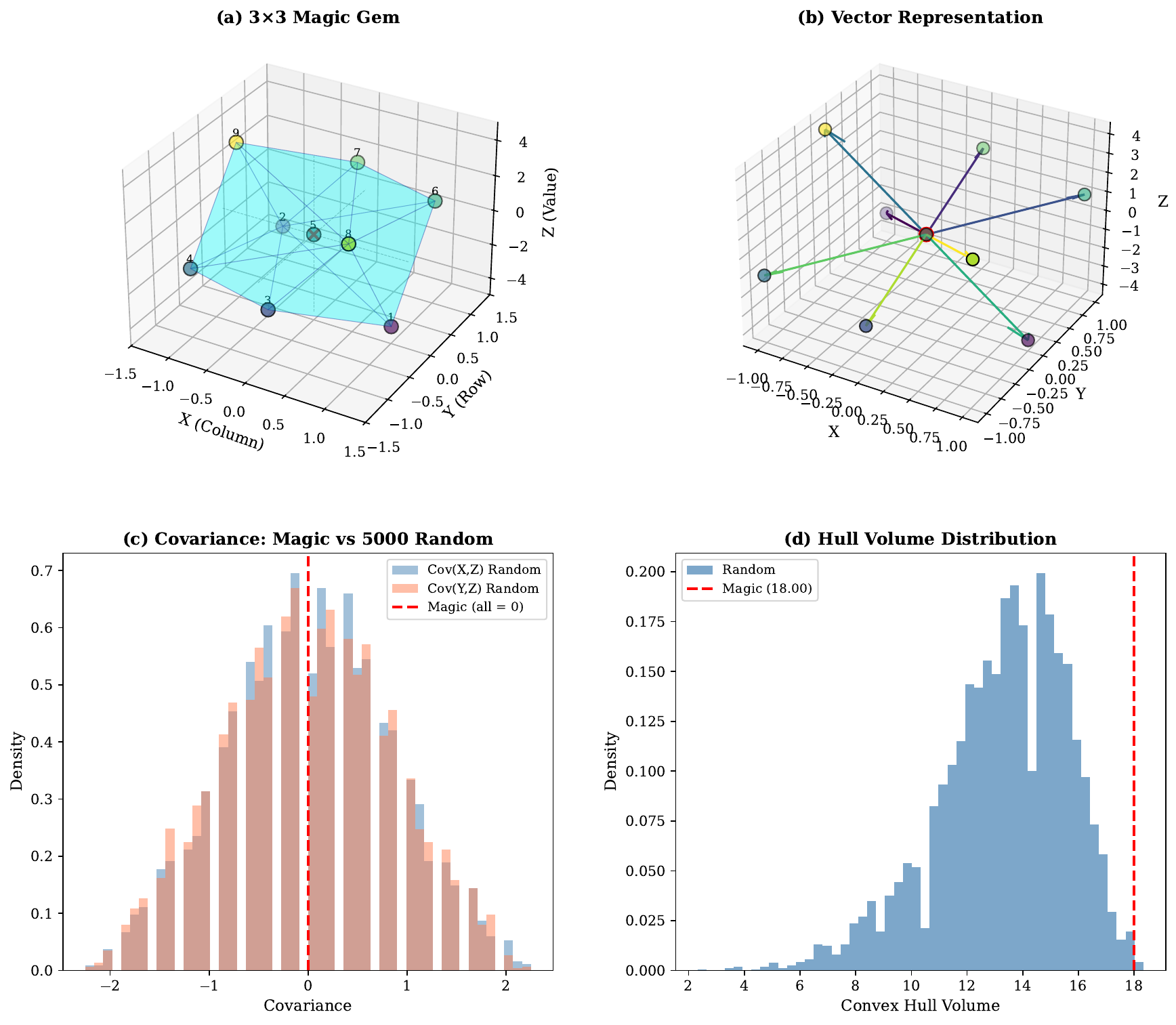}
    \caption{\textbf{Geometric analysis of the $3 \times 3$ Magic Gem.}
    (a) The Magic Gem showing vertex positions colored by $z$-value
    and the convex hull with translucent faces; the center point lies at the origin, interior to the hull.
    (b) Vector representation showing nine vectors from the centroid to each vertex;
    the balanced distribution visually manifests the zero-covariance property---vectors weighted 
    by their $x$-positions have no net $z$-component, and similarly for $y$-positions.
    (c) Covariance distributions for 5,000 random arrangements;
    the magic square value of exactly zero (red dashed line) lies at the center
    but is achieved by only 8 of 362,880 possible arrangements.
    (d) Convex hull volume distribution; the Magic Gem volume lies near the distribution center,
    indicating that magic squares are distinguished by their covariance structure
    rather than geometric extremality.}
    \label{fig:3x3_analysis}
\end{figure}

\subsection{The 4×4 Case and the Low-Mode Relaxation}

The transition to $n = 4$ introduces several important changes.
First, $4 \times 4$ magic squares require different construction methods than odd orders---the 
doubly-even method replaces the Siamese method.
Second, there exist 880 essentially different $4 \times 4$ magic squares,
providing a richer landscape for geometric study.
Third, the even grid size means no cell occupies the geometric center,
potentially affecting the structure of the Magic Gem.

\Cref{prop:magic_implies_zero_cov} makes no distinction between even and odd orders,
predicting $\Cov(X, Z) = \Cov(Y, Z) = 0$ for all magic squares.
Our computations confirm this prediction for the $4 \times 4$ case:
all magic squares achieve exactly zero aggregate covariance to machine precision.

However, as discussed in \Cref{sec:insufficiency}, the four-term ``low-mode'' energy 
$E_{\mathrm{low}}$ does \emph{not} characterize magic squares for $n \geq 4$.
Explicit counterexamples can be constructed: a $4 \times 4$ arrangement with all four low-mode covariances 
equal to zero (hence $E_{\mathrm{low}} = 0$), yet with row and column sums far from the magic constant.
Such arrangements achieve zero covariance with the aggregate position coordinates $X$ and $Y$
by balancing the deviations in a first-moment sense, without requiring equal sums.

To contrast the two energies, \Cref{fig:energy_comparison} presents a scatter plot 
comparing $E_{\mathrm{low}}$ and $E_{\mathrm{full}}$ across 5,000 random $4 \times 4$ arrangements
plus magic squares.
Magic squares (gold stars) sit at the origin under both measures.
The scatter reveals that the two energies measure genuinely different aspects of arrangement structure:
small $E_{\mathrm{low}}$ does not guarantee small $E_{\mathrm{full}}$.

\begin{figure}[H]
    \centering
    \includegraphics[width=0.85\textwidth]{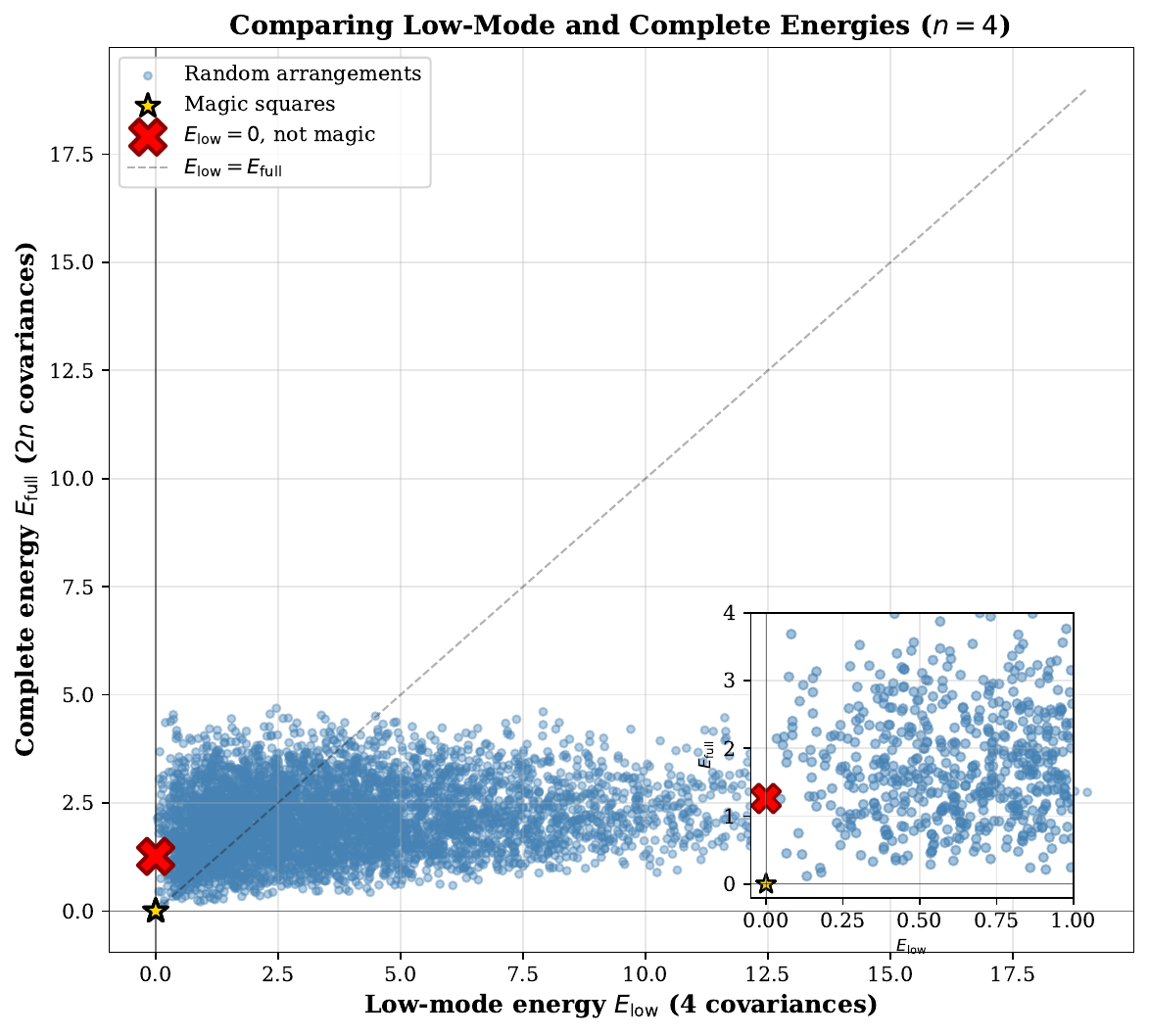}
    \caption{\textbf{Comparing low-mode and complete energies for $n=4$.}
    Scatter plot of $E_{\mathrm{low}}$ (4-term aggregate) versus $E_{\mathrm{full}}$ (using $2n$ individual indicators)
    for 5,000 random arrangements (blue dots) and magic squares (gold stars at origin).
    The inset zooms near the origin. 
    The two energies are correlated but measure different balance properties;
    only $E_{\mathrm{full}}=0$ characterizes magic squares for $n \geq 4$.}
    \label{fig:energy_comparison}
\end{figure}

\subsection{Geometric Structure of the $n=4$ Magic Square Population}

Beyond validating the zero-covariance theorem, the $n = 4$ case provides an opportunity 
to explore geometric diversity within the magic square population.
The 880 essentially different $4 \times 4$ magic squares, while all satisfying the same covariance conditions,
exhibit varying polyhedral structures and perturbation resistance.

The comprehensive perturbation analysis (\Cref{fig:perturbation_gaps}) revealed 
a striking trimodal distribution of minimum gaps for $n=4$,
suggesting that the 880 squares cluster into geometric subclasses with different stability properties.
This geometric diversity, combined with the trimodal gap structure,
indicates that the 880 essentially different $4 \times 4$ magic squares 
may admit a meaningful geometric classification beyond $D_4$ equivalence.

\subsection{Cross-Order Comparison and Scaling Analysis}

The zero-covariance theorem holds universally across orders, 
but the geometric and energy landscape properties evolve systematically with $n$.
\Cref{fig:comparison_scaling} presents a unified view of Magic Gems across orders $3$, $4$, and $5$,
along with key scaling metrics.

\begin{figure}[H]
    \centering
    \includegraphics[width=\textwidth]{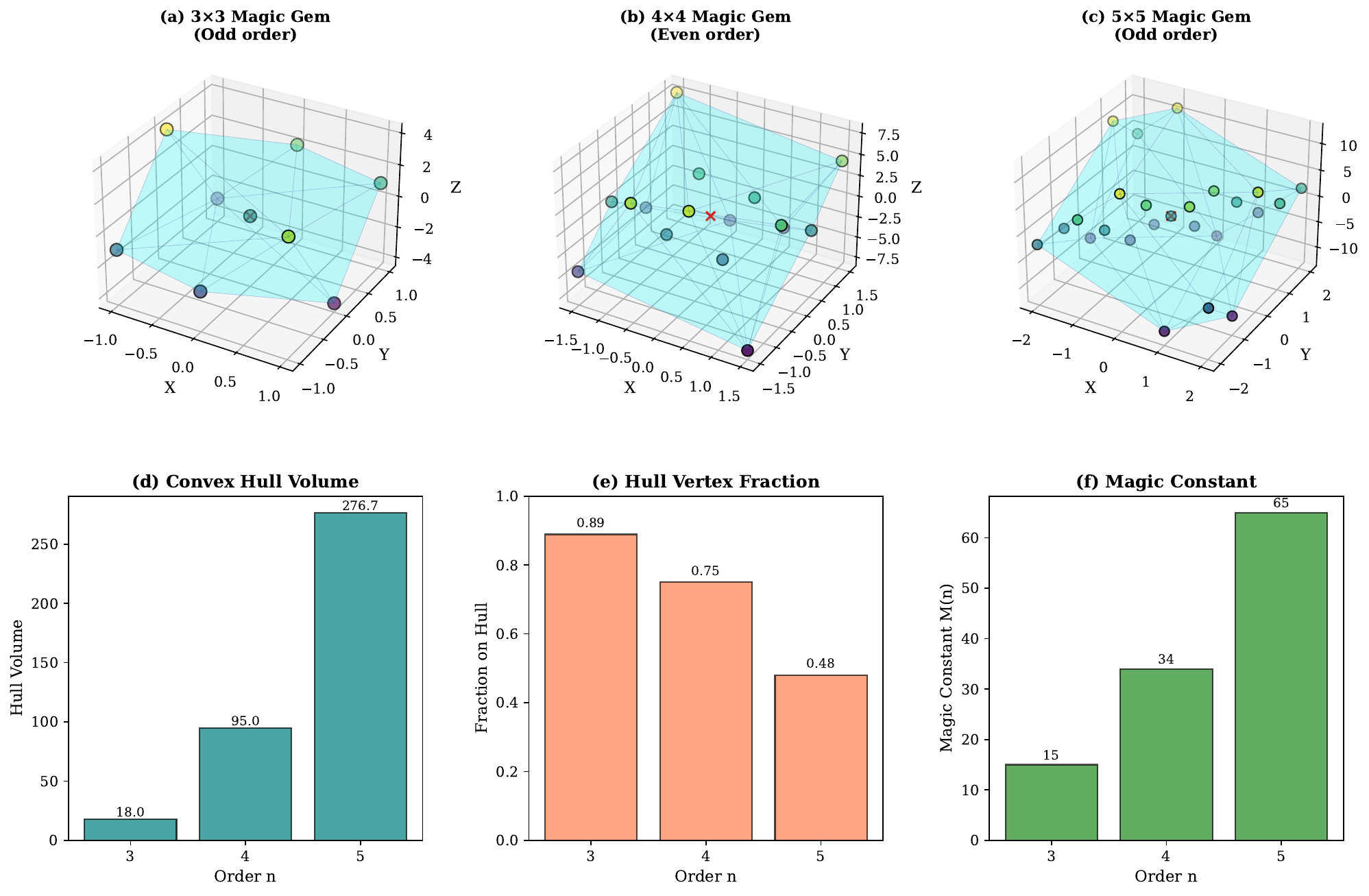}
    \caption{\textbf{Cross-order comparison and scaling analysis.}
    \emph{Top row (a--c):} Side-by-side comparison of $3 \times 3$, $4 \times 4$, and $5 \times 5$ Magic Gems.
    The odd-even-odd progression reveals structural differences (e.g., central vs.\ off-center grids)
    while the zero-covariance property persists across all orders.
    \emph{Bottom row (d--f):} Scaling properties.
    (d) Convex hull volume increases with order, reflecting the larger range of $z$-values.
    (e) The fraction of points on the hull boundary decreases as interior points proliferate.
    (f) The magic constant $M(n) = n(n^2+1)/2$ grows quadratically.}
    \label{fig:comparison_scaling}
\end{figure}

Several systematic patterns emerge.
The convex hull volume grows rapidly with $n$, from 18.0 for $n=3$ to 276.7 for $n=5$,
reflecting both the expanding $z$-value range ($-4$ to $4$ for $n=3$, versus $-12$ to $12$ for $n=5$)
and the increasing number of vertices.
The fraction of points lying on the hull boundary decreases from $8/9 \approx 0.89$ for $n = 3$ 
to roughly $0.64$ for $n = 5$, as the hull interior becomes more populated.

\Cref{tab:comparison_all} summarizes key properties across orders,
demonstrating both the growth in complexity and the invariance of the zero-covariance characterization.

\begin{table}[H]
\centering
\begin{tabular}{lccc}
\toprule
Property & $n = 3$ & $n = 4$ & $n = 5$ \\
\midrule
Order parity & Odd & Even & Odd \\
Total vertices & 9 & 16 & 25 \\
Hull vertices (typical) & 8 & 12 & 16 \\
Magic constant $M(n)$ & 15 & 34 & 65 \\
$\Cov(X, Z)$ & 0 & 0 & 0 \\
$\Cov(Y, Z)$ & 0 & 0 & 0 \\
Essentially different squares & 1 & 880 & $\sim 3.4 \times 10^7$ \\
$E_{\mathrm{low}}=0$ is sufficient? & Yes & No & No \\
Construction method & Siamese & Doubly-even & Siamese \\
\bottomrule
\end{tabular}
\caption{Comparison of Magic Gem properties across orders 3, 4, and 5.
All orders exhibit the zero aggregate covariance property (\Cref{prop:magic_implies_zero_cov}).
For $n=3$, the low-mode energy $E_{\mathrm{low}}$ suffices to characterize magic squares;
for $n \geq 4$, the complete energy $E_{\mathrm{full}}$ is required (\Cref{rem:four_term_insufficient}).}
\label{tab:comparison_all}
\end{table}

The even-order case ($n = 4$) demonstrates that the zero-covariance property
does not depend on having a central cell at the origin.
The algebraic proof---that equal row and column sums imply vanishing aggregate covariances---holds 
regardless of the grid's parity, confirming the universality of the framework.
However, the insufficiency of $E_{\mathrm{low}}$ for $n \geq 4$ (\Cref{fig:energy_comparison})
underscores the necessity of the complete indicator-based energy $E_{\mathrm{full}}$ for a rigorous characterization.

\subsection{Large-Scale Energy Landscape Analysis}
\label{sec:large_scale_landscape}

With the Complete Energy Characterization Theorem (\Cref{thm:complete_energy_char}) established,
we turn to exploring the \emph{structure} of the energy landscape and its scaling properties.
Leveraging modern multi-core hardware (Mac Studio M5 with 128GB RAM),
we analyzed over 460 million arrangements across orders $n = 3, 4, 5$:
exhaustive enumeration for $n = 3$ (362,880 arrangements),
60 million samples for $n = 4$ (from 20.9 trillion total),
and 400 million samples for $n = 5$ (from approximately $15.5 \times 10^{24}$ total).

Note that this sampling uses the low-mode energy $E_{\mathrm{low}}$ (\Cref{def:lowmode_energy}),
which coincides with the complete energy for $n = 3$ but defines a relaxed notion for larger orders.
The analysis characterizes the landscape structure and the rarity of near-zero configurations.

\subsubsection{Energy Distribution Evolution}

\Cref{fig:energy_landscapes_comparison} presents a comprehensive 9-panel analysis 
of the energy landscape across orders $n = 3, 4, 5$,
revealing a dramatic evolution in landscape structure.
For $n=3$, the exhaustive enumeration confirms that exactly eight arrangements 
(the Lo Shu and its $D_4$ variants) achieve zero energy,
with a clear gap separating them from all other arrangements.

\begin{figure}[H]
    \centering
    \includegraphics[width=\textwidth]{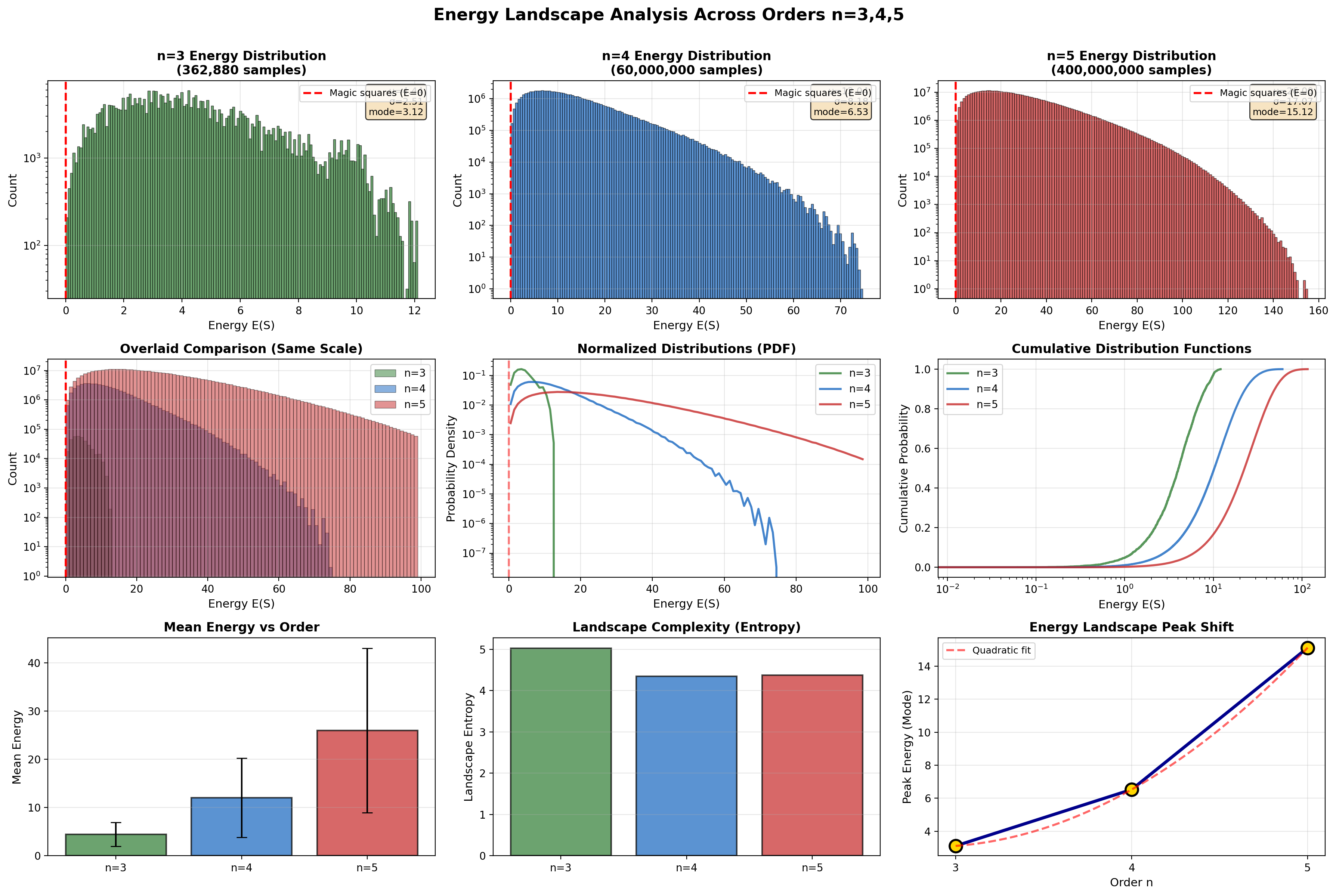}
    \caption{\textbf{Low-mode energy ($E_{\mathrm{low}}$) landscape analysis across orders $n=3,4,5$.}
    Based on exhaustive enumeration ($n=3$: 362,880), and extensive sampling 
    ($n=4$: $6 \times 10^7$; $n=5$: $4 \times 10^8$).
    \emph{Top row:} Individual distributions (log scale) showing modal energy shifting from 3.12 to 15.12.
    \emph{Middle row:} Overlaid comparison, normalized PDFs, and cumulative distributions.
    \emph{Bottom row:} Mean energy with error bars, landscape entropy (varying modestly across orders, $H \approx 4.3$--$5.0$), 
    and peak shift with quadratic fit ($R^2 > 0.99$).
    The red dashed line marks $E_{\mathrm{low}}=0$.
    For $n=3$, this coincides with magic squares; for $n \geq 4$, it also includes 
    non-magic ``low-mode balanced'' configurations (see \Cref{rem:four_term_insufficient}).
    The complete energy $E_{\mathrm{full}}$ characterizes magic squares exactly for all $n$.}
    \label{fig:energy_landscapes_comparison}
\end{figure}

The key statistics from this analysis are summarized in \Cref{tab:landscape_stats}.
For $n = 3$, exhaustive enumeration confirms that exactly 8 of 362,880 arrangements 
achieve zero low-mode energy, corresponding precisely to the $D_4$ variants of the Lo Shu square.
For $n = 4$ and $n = 5$, no zero-energy configurations were found by random sampling---however,
this should not be interpreted as evidence that magic squares are the only zeros of $E_{\mathrm{low}}$.
As discussed in \Cref{rem:four_term_insufficient}, non-magic zeros exist for $n \geq 4$,
but they are sufficiently rare that uniform random sampling is unlikely to discover them.

\begin{table}[H]
\centering
\begin{tabular}{lcccc}
\toprule
Statistic & $n = 3$ & $n = 4$ & $n = 5$ \\
\midrule
Arrangements analyzed & 362,880 (exhaustive) & 60,000,000 & 400,000,000 \\
Mean energy $\mu$ & 4.44 & 12.04 & 26.00 \\
Std deviation $\sigma$ & 2.51 & 8.18 & 17.07 \\
Peak energy (mode) & 3.12 & 6.53 & 15.12 \\
Maximum observed & 12.10 & 74.64 & 155.10 \\
Landscape entropy & 5.03 & 4.34 & 4.37 \\
Zero-energy fraction & $2.2 \times 10^{-5}$ & $< 10^{-7}$ & $< 10^{-8}$ \\
Low-mode zeros found & 8 & 0 & 0 \\
\bottomrule
\end{tabular}
\caption{Low-mode energy ($E_{\mathrm{low}}$) landscape statistics from large-scale computational analysis.
Based on exhaustive enumeration for $n=3$ and extensive random sampling for $n=4,5$.
For $n \geq 4$, the low-mode energy has non-magic zeros (see \Cref{rem:four_term_insufficient}),
but these are too rare for random sampling to detect.
The complete energy $E_{\mathrm{full}}$ correctly characterizes magic squares for all orders.}
\label{tab:landscape_stats}
\end{table}

\subsubsection{Scaling Laws}

Several systematic scaling patterns emerge from this analysis:

\paragraph{Peak Energy Scaling.}
The modal (most common) energy appears to scale approximately quadratically with order.
The observed peaks of 3.12, 6.53, and 15.12 for $n=3,4,5$ respectively
fit a quadratic curve with $R^2 > 0.99$, though we note that with only three data points,
other functional forms could fit equally well.
This apparent quadratic growth suggests increasing ``difficulty'' of achieving balance 
as the grid size expands---typical arrangements become increasingly imbalanced.

\paragraph{Distribution Broadening.}
The standard deviation grows from 2.51 for $n=3$ to 17.07 for $n=5$,
while the ratio $\sigma/\mu$ remains relatively constant ($\approx 0.56$--$0.68$).
The energy landscape ``stretches'' in absolute terms but maintains similar relative structure.

\paragraph{Landscape Entropy Invariance.}
We quantify distributional complexity using the \emph{landscape entropy}, defined as the 
Shannon entropy of the energy histogram:
\[
H = -\sum_k p_k \log p_k,
\]
where $p_k$ is the fraction of arrangements falling in energy bin $k$ 
(using 200 equally-spaced bins covering the observed energy range).
Remarkably, the landscape entropy remains within a narrow band across orders ($\approx 4.3$--$5.0$),
decreasing from $n=3$ to $n=4$ and then stabilizing.
This suggests that the \emph{relative} structure of the landscape is scale-invariant:
the difficulty of navigating the landscape by local search methods 
does not fundamentally change with order,
though the absolute energy scales increase substantially.

\subsubsection{Scaling Properties Summary}

\Cref{fig:scaling_summary} provides a compact four-panel summary of how energy landscape 
properties scale with magic square order, synthesizing the key quantitative findings.
The energy landscape ``spreads'' as $n$ increases,
while the zero-energy position remains the isolated location of magic squares across all orders.

\begin{figure}[H]
    \centering
    \includegraphics[width=\textwidth]{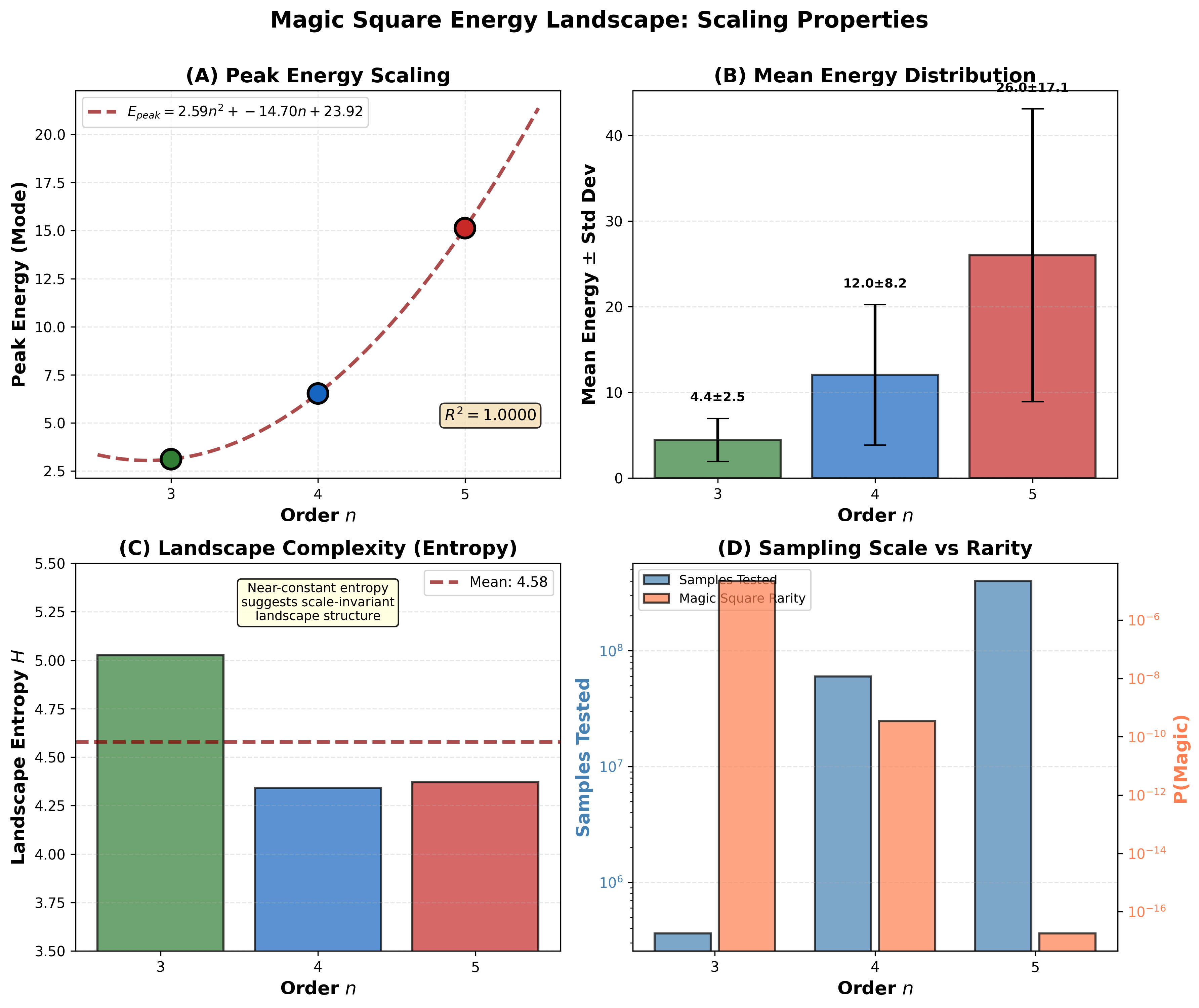}
    \caption{\textbf{Low-Mode Energy Scaling Properties.} 
    (A) Peak low-mode energy follows quadratic scaling with $R^2 > 0.99$.
    (B) Mean energy with standard deviation error bars, showing increasing variance.
    (C) Landscape entropy varies modestly across orders ($H \approx 4.3$--$5.0$), 
    suggesting scale-invariant structure despite dramatic energy range expansion.
    (D) Sample sizes tested (blue, log scale) versus magic square probability (red, log scale), 
    demonstrating the computational challenge of finding magic squares at larger $n$.
    Note: These statistics characterize the low-mode energy $E_{\mathrm{low}}$ landscape;
    the complete energy $E_{\mathrm{full}}$ has qualitatively similar scaling behavior.}
    \label{fig:scaling_summary}
\end{figure}

\subsubsection{Computational Verification Summary}

This large-scale analysis provides computational insight into the energy landscape structure:

\begin{enumerate}
\item \textbf{Complete Characterization Theorem:} Proven analytically in \Cref{thm:complete_energy_char}---for 
all $n \geq 3$, $E_{\mathrm{full}}(S) = 0$ if and only if $S$ is a magic square.
This follows directly from the row/column covariance characterization 
(\Cref{prop:rowcol_cov}, \Cref{prop:diagonal_cov}).

\item \textbf{Low-Mode Characterization (n=3 only):} Exhaustively verified---exactly 8 of 362,880 
arrangements have zero low-mode energy $E_{\mathrm{low}}$, all of which are magic squares.
Notably, 760 arrangements satisfy $\Cov(X,Z) = \Cov(Y,Z) = 0$, but only the 8 magic squares
additionally satisfy the diagonal conditions.

\item \textbf{Low-Mode Non-Characterization (n$\geq$4):} The four-term energy $E_{\mathrm{low}}$ 
defines a broader ``low-mode balanced'' class for $n \geq 4$ that strictly contains magic squares.
These additional zeros are too rare for random sampling to detect, but can be found by targeted search.

\item \textbf{Scaling Behavior:} The low-mode energy landscape exhibits systematic scaling,
with the peak energy appearing to grow approximately quadratically with order 
based on the three data points available ($n=3,4,5$).

\item \textbf{Scale-Invariant Complexity:} Despite the dramatic growth in configuration space
($n!^2$ arrangements for order $n$), the relative landscape structure remains approximately constant,
as evidenced by the stable entropy measure.
\end{enumerate}

These computational results complement the theoretical analysis of \Cref{sec:methodology},
characterizing the structure of the energy landscape
and establishing quantitative scaling laws for future investigation.

\section{Discussion}
\label{sec:discussion}

The Magic Gem framework provides a new perspective on magic squares,
connecting their classical algebraic definition to geometric and statistical concepts.
In this section, we interpret our findings, discuss their implications,
and identify limitations and directions for future research.

\subsection{The Meaning of Zero Covariance}

The zero-covariance property of magic squares (\Cref{prop:magic_implies_zero_cov}) 
admits several complementary interpretations that illuminate their structure.

From a statistical perspective, zero covariance means that position and value are uncorrelated:
knowing where a cell is located provides no linear information about what number it contains.
This is a strong form of balance---the high numbers are distributed so that they do not systematically 
favor any direction from the center, and similarly for the low numbers.
Random arrangements typically violate this balance, exhibiting positive or negative covariance
depending on how the high values happen to cluster spatially.

\subsubsection{The Vector Interpretation}

The vector representation of the Magic Gem (\Cref{sec:vector_rep}) provides a particularly 
intuitive geometric interpretation.
Each vertex defines a vector $\mathbf{v}_{ij} = (x_{ij}, y_{ij}, z_{ij})$ from the origin.
The zero-covariance condition states that these vectors, when weighted by their $x$-coordinates, 
have no net $z$-component:
\[
\sum_{i,j} x_{ij} z_{ij} = n^2 \cdot \Cov(X,Z) = 0.
\]

Geometrically, this means that vectors pointing ``to the right'' (positive $x$) 
do not systematically point higher or lower than vectors pointing ``to the left'' (negative $x$).
The magic square achieves perfect directional balance: 
no horizontal direction predicts vertical tendency.

This interpretation extends to the diagonal conditions. 
The diagonal covariances $\Cov(D_{\mathrm{main}}, Z)$ and $\Cov(D_{\mathrm{anti}}, Z)$ 
measure whether diagonal positions predict value.
For magic squares, all four directional indicators---horizontal position, vertical position, 
main diagonal membership, and anti-diagonal membership---are uncorrelated with value.
The low-mode energy $E_{\mathrm{low}}(S)$ is the sum of squared correlations with these four aggregate directions;
the complete energy $E_{\mathrm{full}}(S)$ extends this to individual row and column indicators.

\subsubsection{Physical and Algebraic Interpretations}

From a physical perspective, the zero-covariance property implies that the Magic Gem, 
viewed as a rigid body with masses at its vertices, has a simplified inertia tensor.
The vanishing of $I_{xz} = -n^2 \Cov(X,Z)$ and $I_{yz} = -n^2 \Cov(Y,Z)$ means that 
the $xz$ and $yz$ products of inertia are zero.
This implies the $z$-axis is aligned with a principal axis of the inertia tensor,
simplifying the rotational dynamics about this direction.

The variances themselves admit exact closed forms: since the $z$-values are a permutation of 
$\{1-\mu, 2-\mu, \ldots, n^2-\mu\}$ where $\mu = (n^2+1)/2$, a standard calculation yields
$\Var(Z) = (n^4-1)/12$ and $\Var(X) = \Var(Y) = (n^2-1)/12$.
Combined with the zero off-diagonal covariances, this completely determines the covariance matrix 
of the Magic Gem coordinates for any order $n$.

From an algebraic perspective, the zero-covariance condition is equivalent to the defining property 
of magic squares---equal row and column sums---expressed in the language of moments.
The first moment (mean) of each coordinate vanishes by construction;
the zero-covariance condition asserts that certain second moments (cross-terms) also vanish.
In fact, this extends to higher-order moments: 
since $X$ depends only on column index $j$, we have
$\mathbb{E}[X^k Z] = \frac{1}{n^2} \sum_j X_j^k \cdot (\text{column}_j \text{ sum} - n\mu) = 0$
for any power $k$, because all column sums equal $M(n) = n\mu$.
Similarly $\mathbb{E}[Y^k Z] = 0$ for all $k$ follows from equal row sums.
Thus magic squares exhibit vanishing cross-moments $\mathbb{E}[X^k Z] = \mathbb{E}[Y^k Z] = 0$ 
at all orders---a stronger form of statistical balance than the covariance condition alone.
This suggests a hierarchy: might there be ``super-magic'' squares 
characterized by additional vanishing moments beyond those implied by equal line sums?

\subsection{Magic Squares as Ground States}

The energy landscape perspective developed in \Cref{sec:energy_landscape} offers perhaps 
the most illuminating interpretation of our results.
The complete covariance energy $E_{\mathrm{full}}(S)$ provides a natural ``imbalance'' measure for arrangements,
and magic squares emerge as the unique configurations where this imbalance vanishes completely.

Consider the analogy to physical systems.
Just as a ball rolling on a landscape eventually comes to rest at a local minimum of potential energy,
arrangements of integers can be imagined as ``settling'' into configurations that minimize imbalance.
The magic squares are the ground states---configurations of perfect equilibrium 
where the geometric balance is absolute.

This perspective connects magic squares to several areas of mathematics and physics:

The connection to optimization theory is immediate.
Gradient descent in the covariance landscape would guide arrangements toward magic configurations,
though the discrete nature of permutations prevents continuous descent.
Simulated annealing or other combinatorial optimization methods could exploit the energy function
to search for magic squares, potentially providing alternatives to classical construction algorithms.

From the perspective of statistical mechanics, magic squares occupy a special thermodynamic role.
In a system where arrangements are weighted by $e^{-\beta E(S)}$ for some inverse temperature $\beta$,
magic squares would dominate at low temperatures as the ground-state configurations.
The density of states near zero energy---essentially zero, given the isolation of magic squares---explains 
why thermal fluctuations cannot easily create or destroy the magic property.

The dynamical systems interpretation is equally rich.
Each magic square defines a basin of attraction in the discrete configuration space,
though these basins have measure zero.
The comprehensive perturbation analysis (\Cref{fig:perturbation_gaps}) confirms that 
there are no ``flat directions'' around magic configurations:
across 116,388 tested perturbations of 923 magic squares,
every single swap increases the energy, supporting the conclusion that magic squares are strict local minima.
Notably, while magic squares are the unique \emph{global} minima (with energy zero),
preliminary analysis for $n=3$ reveals additional \emph{local} minima
where no single swap decreases energy.
Exhaustive enumeration finds 16 such non-global local minima under $E_{\mathrm{full}}$,
and 336 under the simpler $E_{\mathrm{low}}$.
This ruggedness has algorithmic implications: 
gradient-based search methods may become trapped in local minima,
and the choice of energy functional affects the landscape structure.

The perturbation analysis reveals additional structure within the magic square population.
For $n=3$, all eight magic squares exhibit \emph{exactly the same} minimum perturbation gap 
($\Delta_3 = 0.0988$), a consequence of the $D_4$ symmetry preserving the embedding geometry.
More surprisingly, the 880 $n=4$ magic squares cluster into three distinct bands 
of perturbation resistance, suggesting the existence of geometric subclasses 
with different stability properties.
This trimodal structure appears to correlate with convex hull geometry,
indicating a connection between polyhedral structure and energy landscape stability.
This finding warrants further investigation across the full $n=4$ equivalence-class population.
The minimum gap decreases with $n$---from 0.099 for $n=3$ to 0.004 for $n=4$ to 0.002 for $n=5$---quantifying 
how the energy wells become progressively shallower as order increases.
The precise scaling law requires investigation at additional orders.

This energy landscape view also illuminates why magic squares are rare.
They are not merely unusual configurations among many similar ones, but isolated singularities 
in a landscape dominated by positive-energy arrangements.
Finding a magic square by random search is not like finding a needle in a haystack 
(where at least the needle is surrounded by hay);
it is like finding a single point at the bottom of a deep well in a vast, rugged terrain.

\subsection{Scaling Laws and Landscape Evolution}

The large-scale computational analysis of \Cref{sec:large_scale_landscape} reveals 
systematic scaling behavior that deepens our understanding of the energy landscape.
Three key observations merit discussion.

\paragraph{Peak Energy Scaling.}
The limited data ($n=3,4,5$) suggests the modal energy may scale approximately as $O(n^2)$,
though with only three data points, this conclusion remains tentative.
If confirmed, this would have a natural interpretation:
the ``typical'' imbalance of a random arrangement grows quadratically 
because there are $O(n^2)$ cells to distribute and the variance accumulates.
Such scaling would imply that the search for magic squares becomes progressively harder 
in absolute terms---the energy well containing magic configurations lies at increasingly 
greater depth relative to typical arrangements.

\paragraph{Scale-Invariant Complexity.}
Despite the dramatic growth in configuration space ($n!^2$ arrangements),
the landscape entropy remains within a narrow band ($\approx 4.3$--$5.0$),
decreasing from $n=3$ to $n=4$ and then stabilizing.
This invariance suggests that the \emph{relative} difficulty of local optimization 
is independent of scale: the ruggedness of the landscape, 
measured by the distribution's shape rather than its position,
does not fundamentally change with $n$.
This has practical implications for algorithm design:
local search heuristics that work for small orders may transfer to larger ones,
with the primary challenge being the absolute energy scale rather than landscape structure.

\paragraph{Low-Mode Energy Landscape.}
Across over 460 million sampled arrangements---including exhaustive enumeration for $n=3$,
60 million samples for $n=4$, and 400 million for $n=5$---no zero-energy 
configurations of the low-mode energy $E_{\mathrm{low}}$ were discovered by random sampling.
However, this does \emph{not} imply that magic squares are the only zeros of $E_{\mathrm{low}}$ for $n \geq 4$;
as shown in \Cref{rem:four_term_insufficient}, non-magic zeros exist but are too rare for random sampling to detect.
The low-mode and complete energies measure different aspects of balance,
and only the complete energy $E_{\mathrm{full}}$ (\Cref{thm:complete_energy_char}) provides a rigorous characterization for all orders.
The probability of discovering a magic square by random sampling decreases super-exponentially 
with $n$, from roughly $1$ in $45{,}000$ for $n=3$ to about $2\times 10^{-17}$ for $n=5$
(roughly one in $5\times 10^{16}$).
This quantifies the ``needle in a haystack'' intuition with concrete numbers.

\subsection{Even versus Odd Orders}

Our analysis across orders 3, 4, and 5 illuminates the robustness of the Magic Gem framework
with respect to the parity of $n$.
Odd and even orders differ in several respects:
odd orders admit construction via the Siamese method while even orders require different techniques;
odd orders have a central cell that maps to the origin while even orders do not;
and the symmetry structure of the equivalence classes differs subtly.

Despite these differences, magic squares of all orders exhibit zero covariance (\Cref{prop:magic_implies_zero_cov}).
The $4 \times 4$ case demonstrates that this property does not depend on
the existence of a central cell at the origin.
The algebraic content---that equal row and column sums imply vanishing covariance---is 
purely a statement about weighted sums and holds regardless of grid parity.

This universality suggests that Magic Gems capture something fundamental about magic squares
that transcends the particulars of construction methods or grid geometry.
The covariance condition distills the ``magic'' property to its statistical essence.

\subsection{The Rarity of Magic Configurations}

Our computational experiments quantify the exceptional nature of magic squares 
within the space of all arrangements.
For $n = 3$, only 8 of the 362,880 possible arrangements are magic,
a proportion of approximately $2.2 \times 10^{-5}$, or roughly one in 45,000.
For $n = 4$, the proportion falls to approximately $3.4 \times 10^{-10}$,
and for $n = 5$ it drops further to approximately $1.8 \times 10^{-17}$,
reflecting the increasingly stringent constraints imposed by the magic conditions at larger orders.

In the geometric picture provided by Magic Gems, this rarity corresponds to isolation in the covariance landscape.
Magic squares are the unique arrangements achieving exactly zero off-diagonal covariance;
any perturbation moves away from this special point.
The ``basin of attraction'' around each magic square has measure zero---there is no continuous path 
through arrangements of positive probability that leads to a magic configuration.

This isolation has implications for search algorithms seeking magic squares.
Random sampling is effectively hopeless for $n > 3$, and local search methods face 
the challenge of navigating a landscape where the target configurations are isolated points 
surrounded by vast regions of non-magic arrangements.
The covariance perspective might inform the design of guided search procedures
that explicitly seek to minimize off-diagonal covariance.

\subsection{Connections to Related Work}

The Magic Gem framework builds upon and extends several existing lines of research on magic squares.

\subsubsection{Physical Interpretations}

Loly and collaborators have extensively developed the physical interpretation of magic squares
\cite{loly2004invariance, loly2007franklin, loly2009spectra}.
In seminal work, Loly \cite{loly2004invariance} showed that when the entries of a magic square 
are treated as point masses, the moment of inertia tensor about the center exhibits remarkable 
invariance properties: all three principal moments are equal, making the configuration equivalent 
to a spherical top.
This invariance is a direct consequence of the balanced row, column, and diagonal sums.

Our covariance formulation provides a complementary statistical perspective on this same phenomenon.
The moment of inertia tensor and the covariance matrix are mathematically related---both are 
second-moment tensors measuring the distribution of mass (or value) about the center.
Loly's result that the inertia tensor is proportional to the identity matrix corresponds precisely 
to our finding that the covariance between position coordinates and values vanishes.
The Magic Gem construction makes this connection geometrically explicit through the three-dimensional 
point cloud representation.

\subsubsection{Spectral Properties}

Previous work has also characterized magic squares through their eigenvalue structure,
showing that the matrix of a magic square has predictable spectral properties
\cite{loly2009spectra, nordgren2012properties, nordgren2014spectra}.
Connections to permutation matrices and polytope enumeration have also been explored \cite{beck2006insideout, ahmed2003polyhedral}.
Our covariance characterization complements these algebraic approaches: 
while eigenvalues describe the linear operator defined by the matrix,
covariances describe the statistical structure of the associated point cloud.
Both perspectives capture aspects of the ``balance'' that defines magic squares.

The connection to moment of inertia tensors and physics suggests links to rigid body dynamics
and to moment problems in probability theory.
The constraint that certain moments vanish places magic squares within a broader class
of ``balanced'' configurations that arise in diverse mathematical contexts,
from equal-weight quadrature rules to orthogonal Latin squares.

\subsection{Limitations and Extensions}

Several limitations of our current work suggest directions for future research.

Our analysis focuses primarily on orders three and five, with limited attention to order four.
The $n = 4$ case is particularly rich, with 880 essentially different magic squares
that could provide a testing ground for geometric classification schemes.
Do different $4 \times 4$ magic squares yield geometrically distinct Magic Gems,
and if so, how are they distributed in the space of possible polyhedra?
Addressing these questions would require systematic computation across all equivalence classes.

The restriction to standard magic squares---using integers 1 through $n^2$ with the standard magic constant---excludes
many interesting generalizations.
Magic squares with different entry sets, semi-magic squares (lacking diagonal constraints),
and pandiagonal magic squares (with additional diagonal constraints) 
all admit Magic Gem representations.
Extending our theoretical results to these variants would test the robustness of the covariance characterization.

Higher-dimensional generalizations also beckon.
Magic hypercubes extend magic squares to three or more dimensions,
and their ``Magic Gem'' analogues would be point clouds in four or more dimensions.
While visualization becomes challenging, the algebraic structure---particularly the covariance characterization---should 
generalize naturally.

Finally, the computational complexity of our methods limits their applicability to large orders.
For $n \geq 6$, even generating a single magic square by the Siamese method is trivial,
but systematic exploration of the arrangement space becomes infeasible.
Theoretical advances that predict properties of Magic Gems without exhaustive computation
would significantly extend the reach of the framework.

\subsection{Potential Applications}

Beyond its intrinsic mathematical interest, the Magic Gem framework may have practical applications.

In education, the three-dimensional visualization of magic squares
provides an accessible entry point to concepts from linear algebra, statistics, and geometry.
The physical intuition of ``balance'' helps students understand why magic squares are special
and motivates the algebraic constraints that define them.

In computational design, the covariance characterization could inform algorithms
for constructing or recognizing magic squares.
Gradient-descent methods minimizing off-diagonal covariance might provide 
an alternative to combinatorial search, though the non-convexity of the landscape poses challenges.

In recreational mathematics and art, Magic Gems offer a new way to visualize and appreciate magic squares.
The polyhedra could be 3D-printed as physical objects, 
their shapes encoding the magic property in tangible form.

\subsection{Algorithmic Implications}

The energy landscape perspective suggests natural algorithmic strategies 
for magic square generation beyond classical backtracking. 
Since magic squares are isolated local minima, gradient-based descent in the 
$E_{\mathrm{full}}(S)$ objective becomes viable. 
The challenge lies not in navigating a rugged landscape (entropy is modest, $H \approx 4.3$--$5.0$), 
but in the sheer breadth of the search space and the rarity of solutions.

Simulated annealing with the covariance energy as a temperature-dependent objective, 
or basin-hopping methods that leverage the isolation property, merit investigation. 
If the apparent quadratic scaling of peak energy is confirmed at larger orders, 
temperature schedules for such algorithms should also scale quadratically with order.

The isolation property (every swap increases energy) suggests that local search 
from a near-magic configuration could be effective---if such starting points can be found.
Continuous relaxations of the discrete optimization problem, 
followed by projection onto the nearest permutation, 
offer another avenue enabled by the energy landscape framework.

\section{Conclusion}
\label{sec:conclusion}

We have introduced Magic Gems, a geometric framework for representing magic squares 
as three-dimensional polyhedra.
This framework reveals deep connections between the classical algebraic definition of magic squares
and concepts from statistics, optimization, and statistical mechanics,
opening new avenues for understanding these ancient mathematical objects.

Our central theoretical contribution is the Complete Energy Characterization Theorem 
(\Cref{thm:complete_energy_char}), which establishes that an arrangement of integers in a square grid 
is a magic square if and only if its covariance with every row, column, and diagonal indicator vanishes.
Equivalently, magic squares are precisely the ground states of the complete energy functional 
$E_{\mathrm{full}}$, valid for all orders $n \geq 3$.
This characterization transforms the discrete, combinatorial magic property
into a statistical orthogonality condition with physical meaning.

The proof is elementary once the right framework is established:
each row/column covariance vanishes if and only if that line sums to $M(n)$,
and similarly for the diagonals.
The perturbation analysis confirms that magic squares are isolated local minima,
with every single-swap perturbation strictly increasing the energy.

We also investigated a natural simplification---the four-term ``low-mode'' energy 
using aggregate position coordinates $X$, $Y$ and diagonal indicators.
This characterizes magic squares for $n = 3$ (verified exhaustively),
but defines a strictly larger class of ``low-mode balanced'' arrangements for $n \geq 4$.
The existence of non-magic zeros of the low-mode energy, while initially surprising,
follows from a dimensional argument: four linear constraints cannot determine 
all $2n$ line-sum conditions.
Explicit counterexamples (\Cref{rem:four_term_insufficient}) and comparative energy analysis (\Cref{fig:energy_comparison})
illuminate this distinction,
highlighting the necessity of the complete row/column indicator basis for a rigorous characterization.

The energy landscape perspective offers the most compelling conceptual insight.
Magic squares are not merely configurations satisfying arithmetic constraints;
they are equilibrium states where an imbalance energy vanishes.
Like stones that have rolled to rest at the bottoms of valleys,
magic squares occupy the deepest wells in a rugged terrain of arrangements.
This view connects magic squares to optimization theory, statistical mechanics, and dynamical systems,
suggesting new algorithmic and theoretical approaches.

The Magic Gem framework opens several directions for future research.
The geometric classification of magic squares, particularly for $n = 4$ where 880 essentially different squares exist,
could complement traditional algebraic approaches.
Extensions to magic hypercubes, semi-magic squares, and pandiagonal squares 
would test the generality of our framework.
The ``low-mode balanced'' class identified by $E_{\mathrm{low}} = 0$ for $n \geq 4$
is itself an interesting mathematical object worthy of further study.

The Lo Shu square has fascinated humanity for nearly five thousand years,
its balanced arrangement of numbers inspiring mythology, art, and mathematics across cultures.
By transforming this arrangement into a three-dimensional object---a gem 
that sits at the bottom of an energy well, whose very shape encodes perfect equilibrium---we 
hope to have added a new facet to its enduring mathematical beauty.

\bibliographystyle{plain}
\bibliography{references}

\appendix
\section{Appendix: Technical Details}
\label{sec:appendix}

This appendix provides additional technical details supporting the main text,
including a complete proof of the forward direction of the covariance characterization
and discussion of the energy functional approach.

\subsection{Proof Strategy Overview}

The Complete Energy Characterization Theorem (\Cref{thm:complete_energy_char}) is established via 
two complementary approaches:

\textbf{Forward Direction (Algebraic):} Direct proof that magic squares have zero energy,
following from the algebraic relationship between covariances and line sums.

\textbf{Converse Direction:} For the complete energy $E_{\mathrm{full}}$, the converse follows directly 
from the row/column covariance characterization (\Cref{prop:rowcol_cov}, \Cref{prop:diagonal_cov}).
For the low-mode energy $E_{\mathrm{low}}$, the converse holds only for $n=3$ 
(verified by exhaustive enumeration of all $362{,}880$ arrangements).

\subsection{Detailed Proof: Magic Implies Zero Covariance}

We provide a complete proof of \Cref{prop:magic_implies_zero_cov},
showing that every magic square has vanishing row and column covariances.

\begin{proof}[Detailed Proof of \Cref{prop:magic_implies_zero_cov}]
Let $S = (s_{ij})$ be an $n \times n$ magic square,
and define coordinates as in \Cref{def:gem_coords}:
\begin{align*}
x_{ij} &= j - \frac{n-1}{2}, \\
y_{ij} &= \frac{n-1}{2} - i, \\
z_{ij} &= s_{ij} - \frac{n^2+1}{2}.
\end{align*}

\paragraph{Step 1: Verification that means vanish.}
For the $x$-coordinate, we compute
\[
\bar{x} = \frac{1}{n^2} \sum_{i=0}^{n-1} \sum_{j=0}^{n-1} \left( j - \frac{n-1}{2} \right)
= \frac{1}{n^2} \cdot n \sum_{j=0}^{n-1} \left( j - \frac{n-1}{2} \right).
\]
Since $\sum_{j=0}^{n-1} j = n(n-1)/2$, we have
\[
\sum_{j=0}^{n-1} \left( j - \frac{n-1}{2} \right) = \frac{n(n-1)}{2} - n \cdot \frac{n-1}{2} = 0,
\]
establishing $\bar{x} = 0$. An identical argument shows $\bar{y} = 0$.

For the $z$-coordinate,
\[
\bar{z} = \frac{1}{n^2} \sum_{i,j} \left( s_{ij} - \frac{n^2+1}{2} \right)
= \frac{1}{n^2} \left( \sum_{i,j} s_{ij} - n^2 \cdot \frac{n^2+1}{2} \right).
\]
Since $S$ is a permutation of $\{1, \ldots, n^2\}$, we have $\sum_{i,j} s_{ij} = n^2(n^2+1)/2$,
yielding $\bar{z} = 0$.

\paragraph{Step 2: Expression for $\Cov(X, Z)$.}
With zero means, the covariance is
\[
\Cov(X, Z) = \frac{1}{n^2} \sum_{i,j} x_{ij} z_{ij}
= \frac{1}{n^2} \sum_{i,j} \left( j - \frac{n-1}{2} \right) \left( s_{ij} - \frac{n^2+1}{2} \right).
\]

Expanding the product and separating terms:
\begin{align*}
n^2 \cdot \Cov(X, Z) &= \sum_{i,j} j \cdot s_{ij} 
- \frac{n-1}{2} \sum_{i,j} s_{ij} \\
&\quad - \frac{n^2+1}{2} \sum_{i,j} j 
+ \frac{(n-1)(n^2+1)}{4} n^2.
\end{align*}

We evaluate each term. First, $\sum_{i,j} s_{ij} = n^2(n^2+1)/2$.
Second, $\sum_{i,j} j = n \sum_{j=0}^{n-1} j = n \cdot n(n-1)/2 = n^2(n-1)/2$.
Substituting these and simplifying:
\begin{align*}
n^2 \cdot \Cov(X, Z) &= \sum_{i,j} j \cdot s_{ij} 
- \frac{(n-1) n^2 (n^2+1)}{4} 
- \frac{(n^2+1) n^2 (n-1)}{4} 
+ \frac{(n-1)(n^2+1) n^2}{4} \\
&= \sum_{i,j} j \cdot s_{ij} - \frac{(n-1)(n^2+1) n^2}{4}.
\end{align*}

\paragraph{Step 3: Connection to column sums.}
Let $C_j = \sum_{i=0}^{n-1} s_{ij}$ denote the sum of column $j$. Then
\[
\sum_{i,j} j \cdot s_{ij} = \sum_{j=0}^{n-1} j \cdot C_j.
\]

Since $S$ is a magic square, all column sums equal $M(n) = n(n^2+1)/2$.
Then
\[
\sum_{j=0}^{n-1} j \cdot C_j = M(n) \sum_{j=0}^{n-1} j = \frac{n(n^2+1)}{2} \cdot \frac{n(n-1)}{2} = \frac{n^2(n-1)(n^2+1)}{4},
\]
which exactly equals the second term above. Hence $\Cov(X, Z) = 0$.

The argument for $\Cov(Y, Z) = 0$ is entirely analogous, using row sums in place of column sums.
\end{proof}

\subsection{Why Row and Column Covariances Alone Are Insufficient}

A natural question is whether the conditions $\Cov(X, Z) = \Cov(Y, Z) = 0$ alone 
characterize magic squares (or even semi-magic squares with equal row and column sums).
The answer is negative, and understanding why is instructive.

Let $\delta_j = C_j - M(n)$ denote the deviation of column $j$ from the magic sum.
The constraint $\Cov(X, Z) = 0$ is equivalent to $\sum_j (j - \frac{n-1}{2}) \delta_j = 0$.
Combined with the total sum constraint $\sum_j \delta_j = 0$, this leaves an $(n-2)$-dimensional 
space of possible deviation patterns.

For $n = 3$, the deviation pattern must be $(\delta_0, \delta_1, \delta_2) = t(1, -2, 1)$ for some $t$.
While the $t = 0$ case corresponds to equal column sums, \emph{non-zero values of $t$ are achievable}:
exhaustive enumeration reveals 128 arrangements with column sums $(16, 13, 16)$ (i.e., $t = 1$)
that simultaneously satisfy $\Cov(Y, Z) = 0$.

The total count is 760 arrangements with both $\Cov(X, Z) = \Cov(Y, Z) = 0$,
of which only 8 are magic squares.
The remaining 752 arrangements satisfy the row and column covariance conditions 
but fail to achieve equal row sums, equal column sums, or the diagonal conditions.

\subsection{Complete vs.\ Low-Mode Energy Characterization}

The main text establishes two energy functionals with different characterization properties.

\subsubsection{Complete Energy (Theorem)}

The \emph{complete magic energy} uses individual row and column indicator covariances:
\begin{equation}
E_{\mathrm{full}}(S) = \sum_{k=0}^{n-2} \Cov(R_k, Z)^2 + \sum_{\ell=0}^{n-2} \Cov(C_\ell, Z)^2 
+ \Cov(D_{\mathrm{main}}, Z)^2 + \Cov(D_{\mathrm{anti}}, Z)^2,
\end{equation}
where $R_k(i,j) = \mathbf{1}\{i = k\}$ and $C_\ell(i,j) = \mathbf{1}\{j = \ell\}$.

\begin{theorem}[Complete Energy Characterization---All $n$]
\label{thm:complete_appendix}
For all $n \geq 3$, an arrangement $S$ of $\{1, 2, \ldots, n^2\}$ in an $n \times n$ grid 
is a magic square if and only if $E_{\mathrm{full}}(S) = 0$.
\end{theorem}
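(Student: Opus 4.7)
The plan is to derive Theorem~\ref{thm:complete_appendix} as an almost-immediate corollary of the two covariance characterizations already established (\Cref{prop:rowcol_cov} and \Cref{prop:diagonal_cov}), exploiting the fact that $E_{\mathrm{full}}$ is a sum of squared real quantities. The argument splits cleanly into forward and converse directions, with a small accounting step to handle the single redundant row indicator and the single redundant column indicator that are deliberately omitted from the definition of $E_{\mathrm{full}}$.

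For the forward direction, I would assume that $S$ is a magic square. Then every row sum, every column sum, and both diagonal sums equal $M(n)$. Applying \Cref{prop:rowcol_cov} row-by-row shows that $\Cov(R_k, Z) = 0$ for every $k$ (in particular for $k = 0, \ldots, n-2$), and similarly $\Cov(C_\ell, Z) = 0$ for every $\ell$. \Cref{prop:diagonal_cov} gives $\Cov(D_{\mathrm{main}}, Z) = \Cov(D_{\mathrm{anti}}, Z) = 0$. Every summand in $E_{\mathrm{full}}(S)$ is therefore zero, so $E_{\mathrm{full}}(S) = 0$.

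For the converse, I would assume $E_{\mathrm{full}}(S) = 0$. Since each of the $2n$ summands is a non-negative square, they must vanish term-by-term: $\Cov(R_k, Z) = 0$ for $k = 0, \ldots, n-2$, $\Cov(C_\ell, Z) = 0$ for $\ell = 0, \ldots, n-2$, and both diagonal covariances equal zero. Invoking the forward part of \Cref{prop:rowcol_cov} per row, the first $n-1$ rows each sum to $M(n)$. The missing row sum is recovered from the global constraint: since the entries of $S$ are a permutation of $\{1, \ldots, n^2\}$, their total equals $n^2(n^2+1)/2 = n\,M(n)$, and subtracting the $n-1$ known row sums forces row $n-1$ to sum to $M(n)$ as well. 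The identical bookkeeping recovers the last column sum, and \Cref{prop:diagonal_cov} handles both diagonals. Hence $S$ satisfies all $2n+2$ magic-square line conditions and is a magic square.

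The proof has essentially no hard step once the preparatory propositions are in hand; the one place that requires care is the redundancy argument. Omitting one row and one column indicator from $E_{\mathrm{full}}$ is not a cosmetic choice but reflects a genuine linear dependence: the sum $\sum_k \Cov(R_k, Z)$ equals $\bar{Z} = 0$ identically, so the $n$ row covariances span only an $(n-1)$-dimensional subspace. The hypothesis that $S$ is a permutation of $\{1, \ldots, n^2\}$ (rather than an arbitrary real assignment) is exactly what pins down the total and lets the missing line sum be deduced; this is the one place in the proof where the permutation structure, rather than just the covariance algebra, is actually used.
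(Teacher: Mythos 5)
Your proof is correct and follows essentially the same route as the paper's: forward direction by applying \Cref{prop:rowcol_cov} and \Cref{prop:diagonal_cov} line by line, converse by term-by-term vanishing of the non-negative summands plus the total-sum argument $n^2(n^2+1)/2 = n\,M(n)$ to recover the omitted row and column. Your added observation that $\sum_k \Cov(R_k,Z) = \bar{Z} = 0$ explains the redundancy more explicitly than the paper does, but the substance of the argument is identical.
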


This is proven in \Cref{thm:complete_energy_char} of the main text.
The proof follows directly from the row/column covariance characterization:
each row covariance vanishes iff that row sums to $M(n)$,
each column covariance vanishes iff that column sums to $M(n)$,
and each diagonal covariance vanishes iff that diagonal sums to $M(n)$.

\subsubsection{Low-Mode Energy (Partial Characterization)}

The simpler \emph{low-mode energy} uses aggregate position coordinates:
\begin{equation}
E_{\mathrm{low}}(S) = \Cov(X, Z)^2 + \Cov(Y, Z)^2 + \Cov(D_{\mathrm{main}}, Z)^2 + \Cov(D_{\mathrm{anti}}, Z)^2.
\end{equation}

\textbf{For $n = 3$:} Exhaustive enumeration confirms that $E_{\mathrm{low}}(S) = 0$ 
if and only if $S$ is a magic square. Exactly 8 of 362,880 arrangements achieve zero energy,
all being the Lo Shu and its $D_4$ symmetry variants.

\textbf{For $n \geq 4$:} The condition $E_{\mathrm{low}}(S) = 0$ defines a \emph{strictly larger} class
of ``low-mode balanced'' arrangements that properly contains the magic squares.
Counterexamples exist: for example, the $4 \times 4$ arrangement
\[
\begin{pmatrix}
9 & 4 & 12 & 3 \\
16 & 1 & 14 & 7 \\
10 & 15 & 11 & 8 \\
2 & 5 & 6 & 13
\end{pmatrix}
\]
has $E_{\mathrm{low}}(S) = 0$ (both diagonals sum to 34, and the first-moment conditions are satisfied),
but row sums $(28, 38, 44, 26)$ and column sums $(37, 25, 43, 31)$ are not equal.
This demonstrates that the four-term energy is insufficient for $n \geq 4$.

\textbf{Why random sampling failed to detect counterexamples:}
The probability of a uniformly random permutation achieving $E_{\mathrm{low}} = 0$ is extremely low
(estimated below $10^{-8}$ for $n = 4$), so 460 million samples are unlikely to encounter such configurations.
However, targeted search methods (e.g., hill-climbing on the constraint residuals) readily find them.
This illustrates a fundamental limitation of random sampling for disproving ``if and only if'' conjectures.

\subsection{Computational Methods}

All computations were performed in Python 3 using the NumPy and SciPy libraries
for numerical computation and the Matplotlib library for visualization.
Convex hulls were computed using SciPy's implementation of the Quickhull algorithm.

Random arrangements were generated by applying the Fisher-Yates shuffle
to the array $[1, 2, \ldots, n^2]$ and reshaping to an $n \times n$ grid.
This procedure samples uniformly from the $n^2!$ possible arrangements.

Magic squares were generated using classical construction methods:
the Siamese (de la Loubère) method for odd orders ($n = 3, 5$)
and the doubly-even diagonal method for $n = 4$.
This choice of methods ensures that we test the framework across both parity classes,
which differ in their geometric structure (presence or absence of a central cell)
and construction algorithms.
The zero-covariance property was verified to hold within floating-point precision
(typically $< 10^{-14}$) for all generated magic squares of all three orders.

Source code for all analyses and figures is available in the supplementary materials.
An interactive web application for visualizing Magic Gems is available at
\url{https://kylemath.github.io/MagicGemWebpage/}.

\subsection{Summary Statistics}

\Cref{tab:magic_stats} summarizes key statistics for magic squares of orders 3, 4, and 5.

\begin{table}[H]
\centering
\begin{tabular}{cccccc}
\toprule
$n$ & Arrangements & Magic Squares & Equiv.\ Classes & Probability & $M(n)$ \\
\midrule
3 & $3.6 \times 10^5$ & 8 & 1 & $2.2 \times 10^{-5}$ & 15 \\
4 & $2.1 \times 10^{13}$ & 7,040 & 880 & $3.4 \times 10^{-10}$ & 34 \\
5 & $1.6 \times 10^{25}$ & $2.8 \times 10^{8}$ & $3.4 \times 10^{7}$ & $1.8 \times 10^{-17}$ & 65 \\
\bottomrule
\end{tabular}
\caption{Statistics for magic squares of orders 3, 4, and 5,
showing the total number of arrangements, number of magic squares,
number of equivalence classes under $D_4$, probability of a random arrangement being magic,
and the magic constant $M(n)$.}
\label{tab:magic_stats}
\end{table}

\end{document}